\newcommand{\XX}{a}
\newtheorem{theorem}{Theorem}[section]
\newtheorem{corollary}[theorem]{Corollary}
\newtheorem{lemma}[theorem]{Lemma}
\newtheorem{fact}[theorem]{Fact}
\newtheorem*{fact*}{Fact}
\newtheorem{proposition}[theorem]{Proposition}
\theoremstyle{definition}
\newtheorem{definition}[theorem]{Definition}
\newtheorem{example}[theorem]{Example}
\numberwithin{equation}{section}
\newcommand{\N}{\mathbb N}
\newcommand{\Z}{\mathbb Z}
\newcommand{\A}{\mathbb A}
\newcommand{\F}{\mathbb F}
\newcommand{\fp}{\F_{\!p}}
\newcommand{\fq}{\F_{\!q}}
\newcommand{\cfp}{\overline{\F}_{\!p}}
\newcommand{\cfq}{\overline{\F}_{\!q}}
\newcommand{\R}{\mathbb R}
\newcommand{\C}{\mathbb C}
\newcommand{\id}{{\text{id}}}
\newcommand{\exend}{\hfill $\lozenge$}
\newcommand{\norm}[1]{|| #1 ||}
\begin{document}

\title[Shifted varieties and discrete neighborhoods]{Shifted varieties and discrete neighborhoods around varieties}%
\author[von zur Gathen \&\ Matera]{
Joachim von zur Gathen${}^{1}$,
Guillermo Matera${}^{2,3,4}$}

\address{${}^{1}$B-IT\\
  Universit\"{a}t Bonn\\
  D-53113 Bonn}
  \email{gathen@bit.uni-bonn.de}

\address{${}^{2}$Instituto del Desarrollo Humano\\
Universidad Nacional de Gene\-ral Sarmiento, J.M. Guti\'errez 1150
(B1613GSX) Los Polvorines, Buenos Aires, Argentina}
\address{${}^{3}$National Council of Science and Technology (CONICET),
Ar\-gentina}
\address{${}^{4}$Departamento de Matem\'atica\\
Facultad de Ciencias Exactas y Naturales\\ Universidad de Buenos
Aires, Ciudad Universitaria, Pabell\'on I (1428) Buenos Aires,
Argentina}\email{gmatera@dm.uba.ar}

\thanks{GM is partially supported by the grants PIP
CONICET 11220130100598 and PIO CONICET-UNGS 14420140100027}%
%
%
\keywords{Finite fields, polynomial systems, neighborhoods around varieties, neighborhoods of varieties}%

\date{\today}%
\begin{abstract}
In the area of symbolic-numerical computation within computer algebra,
an interesting question is how ``close'' a random input is to the ``critical'' ones,
like the singular matrices in linear algebra or the polynomials with multiple roots
for Newton's root-finding method.
Bounds, sometimes very precise, are known for the volumes over $\R$ or $\C$ of such neighborhoods
of the varieties of ``critical'' inputs; see the references below.

This paper deals with the discrete version of this question: over a finite field,
how many points lie in a certain type of neighborhood around a given variety?
A trivial upper bound on this number is (size of the variety) $\cdot$ (size of a neighborhood of a point).
It turns out that this bound is usually asymptotically tight, including for the singular matrices,
 polynomials with multiple roots, and pairs of non-coprime polynomials.

The interesting question then is: for which varieties does this
bound not hold? We show that these are precisely those that admit a
shift, that is, where one absolutely irreducible component is a
shift (translation by a fixed nonzero point) of another such
component. Furthermore, the shift-invariant absolutely irreducible
varieties are characterized as being cylinders over some base
variety.

Computationally, determining whether a given variety is shift-invariant turns out to be intractable,
namely NP-hard even in simple cases.

\end{abstract}

\maketitle
\section{Introduction}
In computer algebra, the area of symbolic-numerical computation has gained a lot of attention
in the past decades.
One question here is how ``close'' an input is to a set of ``critical'' instances.
Iterative numerical methods may not work on specific ``critical'' (or ``ill-posed'') inputs,
and may work badly on inputs ``close'' to these critical ones.
Sometimes this is described by \emph{condition numbers}, where large values
indicate closeness to criticality.
For some tasks of linear algebra, the singular (square) matrices are critical
and $1/|\text{determinant}|$ is the condition number.
In the Newton method for finding roots of (univariate) polynomials,
inputs with multiple roots are critical, and one might consider $1/|\text{discriminant}|$ as a condition number.
A well-studied issue, starting with \cite{schoenhage1985} and \cite{hribernigStetter1997},
is the \emph{approximate gcd} of a pair of univariate polynomials:
how close is it to a pair of non-coprime polynomials, with $1/|\text{resultant}|$ as a condition number.
Such condition numbers indicate the accuracy of solutions and the number of iterations until convergence.

Much work about this question has focussed on individual inputs.
A more global question is: what is the probability for a uniformly random input (say, a random matrix)
to be $\epsilon$-close to some critical (that is, singular) one?

The general question of the measure (over $\R$ or $\C$) of such $\epsilon$-neighborhoods
(or $\epsilon$-tubes, tubular neighborhoods) of varieties is first considered in \cite{hot39} and \cite{wey39}.
In Smale's (\citeyear{sma81}) work on the efficiency of Newton's method, the size of $\epsilon$-neighborhoods
around the polynomials with multiple roots plays a major role.
\cite{ren87} extends this to solving general complex polynomial systems.
\cite{dem88} provides upper and lower bounds on the size of such neighborhoods of varieties,
and in particular for singular matrices and for polynomials with multiple roots, over $\R$ and $\C$.
\cite{belpar07} generalize and improve these findings.
The most precise result for matrices is the exact determination by \cite{edelman88,edelman92}
of the distribution function of the condition number.
Von zur Gathen \&\ Matera (\citeyear{gatmat18}) present upper and lower bounds on the size of $\epsilon$-neighborhoods
of the variety of decomposable univariate polynomials over $\R$ and $\C$.

Such results may be viewed as continuous analogs of the corresponding counting problem over finite fields,
replacing  the probability for a uniformly random input being ``in'' by being ``close to'' the variety in question.

The present paper turns this question around by determining the asymptotic size of
\emph{discrete neighborhoods} of varieties over a finite field.
This might be called \emph{discretizing a continuous version of a discrete problem}.
It turns out that this size is usually asymptotically close to a trivial upper bound.
Interestingly, the exceptions can be characterized as \emph{shifted varieties}.
Alas, the computational problem of determining whether a given variety belongs to this class
is intractable for growing input size.
%

In more detail, we start with an odd prime $p$, the field $\fp =
\{-(p-1)/2, \ldots, (p-1)/2 \}$ with $p$ elements considered as a
subset of the integers $\Z$ with arithmetic modulo $p$, positive
integers $n$ and $h < p/2$, the $\infty$-norm $\norm{a} = \max_{1
\leq i \leq n} |a_i|$ for $a = (a_1, \ldots, a_n) \in \fp^n$ and the
ball $U_h = \{ a \in \fp^n \colon \norm{a} \leq h \}$ of radius $h$
and with $\# U_h = (2h+1)^n$ elements.

Given a system $f$ of polynomials in $\fp[x_1, \ldots, x_n]$,
we consider its affine variety $X = \{ f=0 \} \subseteq \cfp^n$,
 its rational points $X(\fp)$ that lie in $\fp^n$,
 and the ``discrete neighborhood'' $X(\fp)+U_h = \{ a+u \in
\fp^n \colon a \in X(\fp), u\in U_h\}$ around $X(\fp)$ of radius
$h$. Then
\setcounter{equation}{0}
\begin{equation}\label{eq:UpperBound}
\# (X(\fp)+U_h) \leq \#X(\fp) \cdot \# U_h.
\end{equation}
This paper addresses the question:
\begin{equation}
\renewcommand{\theequation}{Q}
\label{question} \text{Is the neighborhood's size close to this
upper bound?}
\end{equation}
%
Thus we ask for upper bounds on the (non-negative) difference
\setcounter{equation}{1}
\begin{equation}
\renewcommand{\theequation}{\thesection.\arabic{equation}}
\label{Delta}
\Delta =  \#X(\fp) \cdot \# U_h - \# (X(\fp)+U_h).
\end{equation}
A small such bound implies a lower bound on $\#(X(\fp)+U_h)$.

The central notion for understanding \eqref{question} turns out to
be the \emph{shift} of a variety, which is the translation by a
nonzero constant vector of the coordinates. If no absolutely
irreducible component with maximal dimension of $X$ is a shift of
another component, then the answer to (\ref{question}) is ``yes''.
For the opposite case, we exhibit examples where the answer is
``no''. {When $X$ is absolutely irreducible, the condition on shifts
turns out to be necessary and sufficient}
(Corollary \ref{character}).
Examples of \emph{shift-free} absolutely irreducible varieties include:
square matrices of rank bounded by some fixed number, non-squarefree polynomials,
pairs of non-coprime polynomials,
decomposable polynomials, and graphs of polynomials.


\section{Preliminaries}

Let $q$ be a power of a prime $p$ and $\fq$ a finite field with $q$
elements. We denote by $\cfq$ the algebraic closure of $\fq$ and by
$\mathbb{A}^n=\mathbb{A}^n(\cfq)$ the affine $n$-dimensional
space over $\cfq$. A nonempty subset $X\subset\mathbb{A}^n$ is an
affine subvariety of $\mathbb{A}^n$ (a variety for short) if it is
the set of common zeros in $\mathbb{A}^n$ of some set of polynomials in
$\cfq[x_1,\dots,x_n]$. Further, $X$ is an
$\fq$-variety (or $\fq$-definable) if it can be defined by polynomials in
$\fq[x_1,\dots,x_n]$. We will use the notations
 $\{f_1 = \cdots = f_s=0\}$ and $\mathcal{V}(f_1,\dots,f_s)$ to denote
the $\fq$-variety defined by $f_1,\dots,f_s$.

An $\fq$-variety $X\subset \A^n$ is $\fq$-irreducible if it cannot
be expressed as a finite union of proper $\fq$-subvarieties of $X$.
Further, $X$ is absolutely irreducible if it is $\cfq$--irreducible
as an $\cfq$--variety. Any $\fq$-variety $X$ can be expressed as a
non-redundant union $X=X_1\cup \cdots\cup X_m$ of irreducible
$\fq$-varieties, unique up to reordering,
which are called the {\em irreducible}
$\fq$-components of $X$. Some of them may be absolutely irreducible.

For an $\fq$-variety $X\subset\A^n$, its defining ideal $I(X)$ is
the set of polynomials in $\fq[x_1,\ldots, x_n]$ vanishing on $X$.
The {\em dimension} $\dim X$ of an
$\fq$-variety $X$ is the length $r$ of a longest chain
$X_0\varsubsetneq X_1 \varsubsetneq\cdots \varsubsetneq X_r$ of
nonempty irreducible $\fq$-varieties contained in $X$.

The degree $\deg X$ of an irreducible $\fq$-variety $X$ is the
maximum number of points lying in the intersection of $X$ with a
linear space $L \subseteq \cfq^n$
 of codimension $\dim X$, for which $X\cap L$ is
finite. More generally, following \cite{Heintz83} (see also
\cite{Fulton84}), if $X=X_1\cup\cdots\cup X_m$ is the decomposition
of $X$ into irreducible $\fq$-components, then the degree of $X$ is
$$\deg X=\sum_{1 \leq i \leq m} \deg X_i.$$
The following {\em B\'ezout inequality} holds (see \cite{Heintz83},
\cite{Fulton84}, \cite{Vogel84}): if $X$ and $Y$ are
$\fq$-varieties, then
\begin{equation}\label{eq:BezoutInequality}
\deg (X\cap Y)\le \deg X \cdot \deg Y.
\end{equation}

In the following, we usually state explicit inequalities, but the spirit is
that the field size $q$ is (much) larger than the geometric quantities like $n$,
$\deg X$, and $h$, so that our bounds should be taken as asymptotics in $q$.
Thus an upper bound on $\Delta$ is  ``small'' if it is of smaller order in $q$ than the
arguments of $\Delta$.

We denote by $X(\fq)$ the set of $\fq$-rational points of an
$\fq$-variety $X\subseteq\A^n$, namely, $X(\fq)=X\cap \fq^n$. For $X$
of dimension $r$ and degree $d$, we let
$X= X_1\cup \cdots \cup X_{m}$ be the
decomposition of $X$ into irreducible $\fq$-components, and suppose
that $X_1,\ldots,X_\sigma$ are absolutely irreducible of dimension $r$
 and that
 $X_{\sigma+1},\ldots,X_m$ are not.
Then  $X_1,\ldots,X_\sigma$ provide the main contribution to $\# X(\fq)$
and we call them the \emph{essential components}.
 We write
\begin{align*}
d_i=\deg X_i \text{ for } 1\le i\le m \text{ and } D=\sum_{1 \leq i \leq \sigma}d_i.
\end{align*}
Then the following bounds on  $\#X(\fq)$ hold.
\begin{fact}
\label{generalUpperBounds}
\begin{enumerate}
\item
\label{eq:UpperBoundRationalPoints} $   \#X(\fq)\leq d q^r.$
\item
\label{th:EstimateRatPointsVariety}
If $q>d$ and $\sigma>0$, then
$$|\# X(\fq)-\sigma q^r|\le (D-1)(D-2)q^{r-{1}/{2}}+
(5D^{{13}/{3}}+d^2)q^{r-1}.
$$
\item
\label{lemma:RelativelyIrredVarieties}
If $X$ is an irreducible $\fq$-variety and not absolutely irreducible, then
$$\#X(\fq)\le d^2q^{r-1}/4.$$
\end{enumerate}
\end{fact}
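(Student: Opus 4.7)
My plan is to derive the three bounds from, respectively, a projection argument, a Galois descent, and a quantitative Lang--Weil estimate combined with inclusion--exclusion.

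For (i), after reducing to the irreducible case via $\#X(\fq)\le\sum_i\#X_i(\fq)$, for each irreducible $\fq$-component $X_i$ of dimension $r_i\le r$ and degree $d_i$ I would apply a Noether normalization: a generic linear projection $\pi\colon X_i\to\A^{r_i}$ is surjective and finite of degree at most $d_i$, whence $\#X_i(\fq)\le d_i q^{r_i}\le d_i q^r$, and summing yields $\#X(\fq)\le dq^r$. For (iii), let $X=Y_1\cup\cdots\cup Y_k$ be the absolute decomposition, with $k\ge 2$ by hypothesis. Since $X$ is $\fq$-irreducible, the Frobenius of $\cfq/\fq$ permutes the $Y_j$ transitively, so they share a common degree $d/k\le d/2$. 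Any $x\in X(\fq)$ is Frobenius-fixed; if $x\in Y_j$, transitivity forces $x\in Y_i$ for every $i$, so $X(\fq)\subseteq Y_1\cap Y_2$. This is a proper subvariety of the irreducible $Y_1$, hence of dimension at most $r-1$ and, by the B\'ezout inequality \eqref{eq:BezoutInequality}, of degree at most $(d/2)^2=d^2/4$. Applying (i) to $Y_1\cap Y_2$ gives the bound.

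The main effort is (ii). To each essential component $X_i$ I would apply a quantitative Lang--Weil estimate of the form $|\#X_i(\fq)-q^r|\le (d_i-1)(d_i-2)\, q^{r-1/2}+C\,d_i^{13/3}\, q^{r-1}$, valid for an absolutely irreducible $\fq$-variety of dimension $r$ and degree $d_i$ when $q>d_i$. Summing over $1\le i\le\sigma$ and using $\sum_i(d_i-1)(d_i-2)\le(D-1)(D-2)$ (which rearranges to $\sum_{i<j}d_id_j\ge\sigma-1$, immediate since each $d_i\ge 1$) produces the main term $\sigma q^r$ together with the first error term. Three residual contributions remain, each of order $q^{r-1}$: pairwise intersections $X_i\cap X_j$ of essential components, of dimension $<r$ and, by B\'ezout, degree $\le D^2$, handled by (i); non-essential components of dimension $<r$, handled by (i); and irreducible but not absolutely irreducible components of dimension $r$, handled by (iii). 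Packaging these into one polynomial yields the claimed $(5D^{13/3}+d^2)q^{r-1}$ bound.

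The principal obstacle is the quantitative Lang--Weil estimate with the specific exponents $13/3$ and leading term $(d_i-1)(d_i-2)$: this rests on Bombieri--Weil bounds for the rational points of an absolutely irreducible curve obtained from a generic linear section of $X_i$, together with a careful bound on the singular locus. Once that black box is granted, the remaining argument is routine bookkeeping with B\'ezout, (i), and (iii).
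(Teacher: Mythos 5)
Note first that the paper does not itself prove this Fact: it cites Cafure--Matera (2006), Theorem~5.7 and Lemma~2.3, for parts (\ref{th:EstimateRatPointsVariety}) and (\ref{lemma:RelativelyIrredVarieties}), and part (\ref{eq:UpperBoundRationalPoints}) is classical. Your sketches of (\ref{eq:UpperBoundRationalPoints}) and (\ref{lemma:RelativelyIrredVarieties}) are essentially correct reconstructions of the cited proofs. The generic-projection argument for (\ref{eq:UpperBoundRationalPoints}) is standard. For (\ref{lemma:RelativelyIrredVarieties}) the Galois-descent argument is exactly right: Frobenius acts transitively on the $k\ge 2$ absolute components, which therefore all have degree $d/k\le d/2$; any $\fq$-point is Frobenius-fixed, hence lies in $Y_1\cap Y_2$; and B\'ezout together with the point-count bound gives $\#X(\fq)\le (d/2)^2 q^{r-1}$. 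One detail you gloss over: $Y_1\cap Y_2$ is in general not $\fq$-definable, so part (\ref{eq:UpperBoundRationalPoints}) of the Fact, stated for $\fq$-varieties, does not literally apply to it. The fix is easy --- the generic-projection bound $\#(V\cap\fq^n)\le(\deg V)\,q^{\dim V}$ actually holds for an arbitrary subvariety $V$ of $\A^n(\cfq)$, since generic $\fq$-linear projections still exist --- but it should be said.

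The genuine gap is in (\ref{th:EstimateRatPointsVariety}). Your plan reduces it to an unproven black box, namely the explicit two-sided Lang--Weil estimate $|\#X_i(\fq)-q^r|\le(d_i-1)(d_i-2)\,q^{r-1/2}+5\,d_i^{13/3}\,q^{r-1}$ for a single absolutely irreducible $\fq$-variety $X_i$ of dimension $r$ and degree $d_i$. But that estimate \emph{is} the substance of Cafure--Matera's Theorem~5.7; invoking it as a black box with exactly these constants is tantamount to invoking the claim itself. Beyond this, the superadditivity $\sum_i(d_i-1)(d_i-2)\le(D-1)(D-2)$ is correctly verified, the bound $\sum_i d_i^{13/3}\le D^{13/3}$ is the standard $\ell^p$ monotonicity, and the strategy of bounding pairwise intersections of essential components by B\'ezout and (\ref{eq:UpperBoundRationalPoints}) and the remaining components by (\ref{eq:UpperBoundRationalPoints}) and (\ref{lemma:RelativelyIrredVarieties}) is the right one. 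But the assertion that all the residuals ``package into'' the precise coefficient $5D^{13/3}+d^2$ is exactly the bookkeeping a proof must carry out, and you have not done it. As a plausibility argument for the shape of the estimate your sketch is fine; as a proof, part (\ref{th:EstimateRatPointsVariety}) stops where the work begins.
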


Proofs of (\ref{th:EstimateRatPointsVariety}) and (\ref{lemma:RelativelyIrredVarieties})
are in  \cite{CaMa06}, Theorem 5.7 and Lemma 2.3.

\section{Neighborhoods around varieties}
Given a polynomial sequence
$f=(f_1,\ldots,f_s)$ in $\fp[x_1, \ldots, x_n]^s$ and the affine
$\fp$-variety $X = \{ f=0 \} \subseteq \A^n$, question (\ref{question}) is concerned with
the size of the ``standard neighborhood''
$$X(\fp)+U_h = \{ a+u \in \fp^n \colon a \in X(\fp), u\in U_h\}.$$
As $\#X(\fp) \cdot \# U_h$ is an optimal upper
bound, we concentrate on lower bounds, or, equivalently,
on upper bounds on the difference $\Delta$ from (\ref{Delta}).

\subsection{Generalized neighborhoods around varieties}

Most of this paper deals with the following more general problem: given an
$\fq$-variety $X= \{ f=0 \} \subseteq \A^n$, and a nonempty set
$U\subset\fq^n$, find lower bounds on
$$X(\fq)+U = \{ a+u \in \fq^n \colon a \in X(\fq), u\in U\}.$$

Since
\begin{equation}\label{eq:LowerBound}
\Delta = \#X(\fq) \cdot \# U - \#(X(\fq) + U) \leq \sum_{a \neq b \in
X(\fq)}\#((a + U) \cap (b + U)) ,
\end{equation}
it is sufficient to show upper bounds on the latter sum.
%

We fix the following notation for an irreducible decomposition of
an $\fq$-variety $X = X_1 \cup \cdots \cup X_m \subset\A^n$ of dimension $r$:
\begin{equation}
\label{components}
\begin{aligned}
X_1,\ldots,X_m 
\colon & \text{irreducible
$\fq$-components},  \\
 X_1,\dots,X_\sigma \colon &
\text{absolutely irreducible of dimension $r$},\\
X_{\sigma+1},\ldots,X_\rho \colon & \text{absolutely irreducible of dimension
less than $r$},\\
X_{\rho+1},\ldots,X_m \colon & \text{not absolutely
irreducible},
\end{aligned}
\end{equation}
with $0 \leq \sigma \leq \rho \leq m$. Recall that $
X_1,\dots,X_\sigma$ are the \emph{essential components}. According
to Fact \ref{generalUpperBounds}, the cardinality of the set
$X_j(\fq)$ for an essential component $X_j$
is of order $q^r$, while that of the other components is at most of order $q^{r-1}$.
The following notions of shifts are central to our considerations.

\begin{definition}
\label{defShiftFree}
\begin{enumerate}
\item
For $0 \neq u = (u_1, \ldots, u_n) \in \A^n$ and $g \in \fq[x_1,$ $ \ldots,$ $x_n]$,
$g^{(u)} = g(x_1-u_1, \ldots, x_n-u_n)$ is $g$ {\em shifted by}
$u$.
\item
For $f \in \fq[x_1, \ldots, x_n]^s$ and $X = \{ f=0
\} \subseteq \A^n$, $f^{(u)} $ consists of the polynomials in $f$
shifted by $u$, and
 $X^{(u)} = \{ f^{(u)} = 0\} = \{a+u \colon a \in X\}$ is $X$ {\em shifted by} $u$.
\item
\label{u-shiftfree}
$X$ with decomposition (\ref{components}) is {\em essentially $u$-shift-free}
if none of its components $X_i$ equals $X_j^{(u)}$
for any $i, j$ with $1 \leq i,j \leq \sigma$. (The case $i=j$ is included, and
if $\sigma=0$, then $X$ is essentially $u$-shift-free.)
\item
\label{defShiftFree1}
For $U \subseteq \fq^n$, $X$ is {\em essentially $U$-shift-free} if it is essentially $u$-shift-free for all $u \in U$.
\end{enumerate}
\end{definition}

Thus $X^{(u)}$ is isomorphic to $X$, and if $X=Y^{(u)}$, then $X^{(-u)} = (Y^{(u)})^{(-u)} = Y$.
When $X$ is absolutely irreducible, so that $\sigma = m =1$ in (\ref{components}),
we leave out the word ``essentially''.

\begin{lemma}\label{lemma:NumberPointsInTwoDisks}
We have
$$
\sum_{a\not=b\in X(\fq)}\#((a+U)\cap (b+U))\le \#U \cdot \sum_{0\not=u\in
U-U}\#(X(\fq)\cap X^{(u)}(\fq)).
$$
\end{lemma}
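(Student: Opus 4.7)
The plan is to rewrite the intersection $(a+U) \cap (b+U)$ in terms of the single difference $u = a - b$, and then swap the order of summation so that a shift-counting expression involving $X(\fq) \cap X^{(u)}(\fq)$ emerges as the outer sum.

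First, I would fix $a, b \in X(\fq)$ with $a \ne b$ and set $u = a - b \ne 0$. Writing a generic element of $a + U$ as $c = a + v$ with $v \in U$, membership in $b + U$ becomes $v + u = (a+v) - b \in U$. Hence
$$\#((a+U) \cap (b+U)) = \#\{v \in U : v + u \in U\} = \#(U \cap (U-u)),$$
which is $0$ unless $u$ lies in the difference set $U - U$, and in any case is bounded by $\#U$.

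Next, I would group the outer sum over pairs $(a,b) \in X(\fq)^2$ with $a \ne b$ according to the value of $u = a - b \ne 0$. For a fixed nonzero $u$, the parametrization $b = a - u$ shows that the number of such pairs equals $\#\{a \in X(\fq) : a - u \in X(\fq)\}$. Since $u \in \fq^n$ and $X^{(u)} = X + u$, the condition $a - u \in X$ is equivalent to $a \in X^{(u)}$, so this count is precisely $\#(X(\fq) \cap X^{(u)}(\fq))$. Putting this together with the previous step,
$$\sum_{a \ne b \in X(\fq)} \#((a+U) \cap (b+U)) \;=\; \sum_{0 \ne u \in U-U} \#(U \cap (U-u)) \cdot \#(X(\fq) \cap X^{(u)}(\fq)),$$
and applying $\#(U \cap (U-u)) \le \#U$ inside the sum yields the claimed inequality.

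The argument is just a reordering of a triple counting sum, with no geometric input; the only place to be careful is the shift-convention bookkeeping, to confirm that $\{a \in X(\fq) : a - u \in X(\fq)\}$ is $X(\fq) \cap X^{(u)}(\fq)$ rather than $X(\fq) \cap X^{(-u)}(\fq)$. Once the indexing of $X^{(u)}$ is fixed correctly, there is no substantive obstacle.
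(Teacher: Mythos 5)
Your proof is correct and follows essentially the same double-counting argument as the paper, modulo an immaterial sign convention ($u = a-b$ versus the paper's $u = v-w = b-a$, which is harmless since $U-U$ is symmetric). Your version is slightly more explicit in isolating the exact identity $\sum_{a\ne b}\#((a+U)\cap(b+U)) = \sum_{0\ne u\in U-U}\#(U\cap(U-u))\,\#(X(\fq)\cap X^{(u)}(\fq))$ before applying the crude bound $\#(U\cap(U-u))\le\#U$, whereas the paper bounds the multiplicity directly.
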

\begin{proof}
For $a,b \in X(\fq)$ with $a \neq b$, $v,w \in U$ with $a+v =
b+w$,  and $u=v-w$, we have $u\neq 0$ and $b = a+u \in X(\fq)\cap X^{(u)}(\fq)$. Since any
$u\in U-U$ can be expressed in at most $\#U$ ways as $u=v-w$ with
$v,w\in U$, the lemma follows.
\end{proof}

The following result first establishes an upper bound on the difference $\Delta$ from
\eqref{eq:LowerBound} in terms of intersections of shifted
varieties.
Then we state, under a shift-freeness assumption, a bound which is small compared to the two
arguments of $\Delta$.

\begin{theorem}\label{th:EstimateGenTubeVarieties}
Let $X\subset\A^n$ be an $\fq$-variety of dimension $r$, degree $d<q$,
and decomposition (\ref{components}).
\begin{enumerate}
\item
\label{generalBound}
\begin{align*}
\Delta \le \# U & \cdot \bigl( \sum_{\mbox{${0\not=u\in U-U} \atop{1\leq i,j\leq \sigma} $}}
 \#(X_i(\fq)\cap X_j^{(u)}(\fq))
 \\ &  + 2 (\#(U-U)-1) \sum_{\sigma < i \leq m} \# X_i(\fq) \bigr ).
\end{align*}
\item
\label{shiftFreeBound}
If furthermore $X$ is essentially $(U-U)$-shift-free, then

\begin{align*}
\Delta \le \#U (\#(U-U)-1) \cdot d^2q^{r-1}.
\end{align*}
\end{enumerate}
\end{theorem}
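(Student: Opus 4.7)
The plan is to combine \eqref{eq:LowerBound} with Lemma \ref{lemma:NumberPointsInTwoDisks}, which reduces the task to bounding $\sum_{0 \ne u \in U-U} \#(X(\fq) \cap X^{(u)}(\fq))$, and then to estimate this quantity using the decomposition (\ref{components}).

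For (\ref{generalBound}), I would group components into the essential part $X_{\mathrm{ess}} = X_1 \cup \cdots \cup X_\sigma$ and the non-essential part $X_{\mathrm{non}} = X_{\sigma+1} \cup \cdots \cup X_m$, and similarly for $X^{(u)}$. Expanding $X \cap X^{(u)} = \bigcup_{i,j}(X_i \cap X_j^{(u)})$, every term having at least one non-essential factor lies inside that factor, so $X \cap X^{(u)} \subseteq (X_{\mathrm{ess}} \cap X_{\mathrm{ess}}^{(u)}) \cup X_{\mathrm{non}} \cup X_{\mathrm{non}}^{(u)}$. Since shifting by $u \in \fq^n$ is an $\fq$-biregular isomorphism, $\# X_{\mathrm{non}}^{(u)}(\fq) = \# X_{\mathrm{non}}(\fq) \le \sum_{i>\sigma}\# X_i(\fq)$, and passing to $\fq$-rational points yields
\[
\#(X(\fq) \cap X^{(u)}(\fq)) \le \sum_{1 \le i, j \le \sigma} \#\bigl(X_i(\fq) \cap X_j^{(u)}(\fq)\bigr) + 2 \sum_{i > \sigma} \# X_i(\fq).
\]
Summing over the nonzero $u \in U-U$ and multiplying by $\# U$ produces (\ref{generalBound}).

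For (\ref{shiftFreeBound}) I would bypass (\ref{generalBound}) and prove the uniform estimate $\#\bigl(X_i(\fq) \cap X_j^{(u)}(\fq)\bigr) \le d_i d_j q^{r-1}$ for every pair $1 \le i, j \le m$ and every nonzero $u \in U-U$; summing over $i, j$ bounds $\#(X(\fq) \cap X^{(u)}(\fq))$ by $d^2 q^{r-1}$, and Lemma \ref{lemma:NumberPointsInTwoDisks} then gives the claim. I would case-split on $\dim(X_i \cap X_j^{(u)})$: if it is less than $r$, B\'ezout's inequality \eqref{eq:BezoutInequality} bounds the degree of the intersection by $d_i d_j$ and Fact \ref{generalUpperBounds}(\ref{eq:UpperBoundRationalPoints}) supplies the estimate. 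Otherwise the dimension equals $r$, and $\fq$-irreducibility of $X_i$ and $X_j^{(u)}$ forces $X_i = X_j^{(u)}$.

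The main obstacle is this second case: essentially $(U-U)$-shift-freeness must be invoked precisely to handle it. Since shifting preserves dimension, degree and absolute irreducibility, $X_i = X_j^{(u)}$ makes $X_i$ and $X_j$ share the same status and forces $d_i = d_j$. If $X_i$ (equivalently $X_j$) were essential, the equality $X_i = X_j^{(u)}$ with $u \ne 0$ and $i, j \le \sigma$ would contradict the hypothesis. Hence both $X_i, X_j$ are non-essential, so Fact \ref{generalUpperBounds}(\ref{eq:UpperBoundRationalPoints}) (when $X_i$ is absolutely irreducible of dimension $<r$) or Fact \ref{generalUpperBounds}(\ref{lemma:RelativelyIrredVarieties}) (when $X_i$ is $\fq$-irreducible but not absolutely irreducible) gives $\# X_i(\fq) \le d_i^2 q^{r-1}$, which together with $d_i = d_j$ yields the bound $d_i d_j q^{r-1}$.
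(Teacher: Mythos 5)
Your part~(\ref{generalBound}) is essentially the paper's argument: split $X\cap X^{(u)}$ along (\ref{components}), observe that any pair with a non-essential factor lies inside $X_{\mathrm{non}}\cup X_{\mathrm{non}}^{(u)}$, and use $\#X_i^{(u)}(\fq)=\#X_i(\fq)$; summing over $0\ne u\in U-U$ and applying Lemma~\ref{lemma:NumberPointsInTwoDisks} and (\ref{eq:LowerBound}) gives the bound.

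For part~(\ref{shiftFreeBound}) you take a genuinely different route. The paper plugs into (\ref{generalBound}) and estimates the two pieces separately: the essential double sum is bounded by $D^2q^{r-1}$ via shift-freeness and B\'ezout, the non-essential remainder by $2\#X_i(\fq)\le d_i d\,q^{r-1}$, and one concludes with $D^2+(d-D)d\le d^2$. You instead bypass (\ref{generalBound}) and prove the uniform per-pair bound $\#(X_i(\fq)\cap X_j^{(u)}(\fq))\le d_i d_j q^{r-1}$ for all $1\le i,j\le m$, using a dichotomy on $\dim(X_i\cap X_j^{(u)})$: when it is $<r$, B\'ezout plus Fact~\ref{generalUpperBounds}~(\ref{eq:UpperBoundRationalPoints}) suffice, and when it equals $r$, the $\fq$-irreducibility of $X_i$ and of $X_j^{(u)}$ forces $X_i=X_j^{(u)}$, which shift-freeness rules out among essential components, so both are non-essential and $\#X_i(\fq)$ is controlled by Fact~\ref{generalUpperBounds}~(\ref{lemma:RelativelyIrredVarieties}). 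Summing $\sum_{i,j}d_id_j=d^2$ then finishes. Your route is a bit cleaner, trading the paper's two separate estimates for a single uniform one at the price of an explicit dimension split. One small remark: in your equality case you also invoke Fact~\ref{generalUpperBounds}~(\ref{eq:UpperBoundRationalPoints}) for ``absolutely irreducible of dimension $<r$,'' but that sub-case is vacuous, since $X_i=X_j^{(u)}$ with intersection of dimension $r$ forces $\dim X_i=r$, and a non-essential component of dimension $r$ cannot be absolutely irreducible; only Fact~\ref{generalUpperBounds}~(\ref{lemma:RelativelyIrredVarieties}) is actually used.
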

\begin{proof}
Inequality \eqref{eq:LowerBound} and Lemma
\ref{lemma:NumberPointsInTwoDisks} imply that
$$
\Delta \le \#U\sum_{0\not=u\in U-U}
\#(X(\fq)\cap X^{(u)}(\fq)).$$
Given $0\not=u\in U-U$, we estimate
$$\#(X(\fq)\cap X^{(u)}(\fq))=
\#\big(\bigcup_{1\leq i,j\leq m}X_i(\fq)\cap X_j^{(u)}(\fq)\big).$$
For $\sigma < i\le m$ and $1 \leq j\le m$, we have
$X_i(\fq)\cap X_j^{(u)}(\fq) \subseteq X_i(\fq)$. 
Similarly, $X_i(\fq)\cap X_j^{(u)}(\fq) \subseteq X_j^{(u)}(\fq)$
holds for $\sigma < j\le m$ and all $i$. Thus all these
intersections are contained in $\bigcup_{\sigma < i\le m} (X_i(\fq)
\cup X_i^{(u)}(\fq))$. Together with $\# X_i(\fq) =
\#X_i^{(u)}(\fq)$ this yields the bound claimed in
(\ref{generalBound}).

For (\ref{shiftFreeBound}), we first have $2 \# X_i(\fq) \leq d_i d q^{r-1}$ for $\sigma < i \leq m$
by Fact \ref{generalUpperBounds}.
It remains to consider
$1\le i,j\le \sigma$. By hypothesis $X_i$ and $X_j^{(u)}$
are distinct absolutely irreducible varieties and their intersection
has dimension at most $r-1$ and degree at most $d_id_j$ by the
B\'ezout inequality \eqref{eq:BezoutInequality}. By Fact \ref{generalUpperBounds}
\eqref{eq:UpperBoundRationalPoints} we have for any $0 \neq u \in U-U$ that
\begin{align*}
\#\big(\bigcup_{1\leq i,j\leq \sigma} X_i(\fq)\cap
X_j^{(u)}(\fq)\big)&\le\sum_{1\leq i,j\leq \sigma} \#(X_i(\fq)\cap
X_j^{(u)}(\fq))\\&\le \sum_{1\leq i,j\leq \sigma} d_id_jq^{r-1}.
\end{align*}
\end{proof}

For $\sigma=0$ we have the following less precise result,
which follows from  Fact \ref{generalUpperBounds} \eqref{eq:UpperBoundRationalPoints} and
(\ref{lemma:RelativelyIrredVarieties}), and (\ref{eq:UpperBound}) (for general $U$).
\begin{corollary}
With hypotheses and notations as in Theorem
\ref{th:EstimateGenTubeVarieties}, assume further that $\sigma=0$.
Then
\begin{align*}
\#(X(\fq)+U)\le\#U \cdot d^2q^{r-1}/2.
\end{align*}
\end{corollary}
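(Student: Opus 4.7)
The approach is to combine the trivial upper bound \eqref{eq:UpperBound}, which for any nonempty $U\subseteq\fq^n$ in place of $U_h$ gives $\#(X(\fq)+U)\le \#X(\fq)\cdot\#U$, with a direct estimate on $\#X(\fq)$. The task then reduces to proving $\#X(\fq)\le d^2q^{r-1}/2$, and this is exactly what the two applicable parts of Fact \ref{generalUpperBounds} are tailored to produce when $\sigma=0$.

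I would split $\#X(\fq)\le\sum_i\#X_i(\fq)$ along the decomposition \eqref{components}. With $\sigma=0$, every component $X_i$ falls into one of two types: either absolutely irreducible of dimension strictly less than $r$, in which case Fact \ref{generalUpperBounds}\eqref{eq:UpperBoundRationalPoints} gives $\#X_i(\fq)\le d_iq^{r-1}$, or irreducible but not absolutely irreducible, in which case Fact \ref{generalUpperBounds}\eqref{lemma:RelativelyIrredVarieties} gives $\#X_i(\fq)\le d_i^2q^{r-1}/4$ (using $\dim X_i\le r$). I would then compare both expressions against the uniform per-component budget $d_idq^{r-1}/2$: the first inequality $d_iq^{r-1}\le d_idq^{r-1}/2$ reduces to $d\ge 2$, and the second $d_i^2q^{r-1}/4\le d_idq^{r-1}/2$ reduces to $d_i\le 2d$, which is immediate from $d_i\le d$. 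Summing over $i$ and using $\sum_id_i=d$ then delivers $\#X(\fq)\le d^2q^{r-1}/2$.

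The one step that requires the full force of the hypotheses, and is the main (and essentially only) obstacle, is the inequality $d\ge 2$. I would argue it by contradiction: if $d=1$, then $X$ consists of a single component of degree one, which is a linear affine subspace and therefore absolutely irreducible; since $\dim X=r$ this component has dimension $r$, so it belongs to the first group in \eqref{components} and forces $\sigma\ge 1$, contradicting the standing assumption $\sigma=0$. Once $d\ge 2$ is secured, the remaining estimate is a one-line book-keeping and the corollary follows.
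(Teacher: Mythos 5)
Your proof is correct and follows exactly the route the paper indicates: the trivial bound \eqref{eq:UpperBound} applied to general $U$ reduces the problem to bounding $\#X(\fq)$, which you do component-by-component using Fact~\ref{generalUpperBounds}\eqref{eq:UpperBoundRationalPoints} for the lower-dimensional absolutely irreducible pieces and Fact~\ref{generalUpperBounds}\eqref{lemma:RelativelyIrredVarieties} for the non-absolutely-irreducible ones. Your explicit observation that $\sigma=0$ forces $d\ge 2$ (since a degree-one component would be a linear space, hence absolutely irreducible of dimension $r$) is a detail the paper leaves implicit, and your per-component budget $d_i d q^{r-1}/2$ is a clean way to make the bookkeeping close.
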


When $X$ is not shift-free, then $\Delta$ may be large,
as in the following example.
\begin{example}
\label{parallelHyperplanes} Let $p > d+2h$, $h\ge 1$, $n\geq 2$, $f
= x_1 \cdot (x_1-1) \cdots (x_1 - (d-1)) \in \fp[x_1, \ldots, x_n]$,
so that $d=\deg f$ and $X= \{ f=0\}$ is the union of $d$ parallel
hyperplanes $H_i = \{x_1 = i\}$ for $0 \leq i < d$ with distance 1
between ``neighbors'', and $X(\fp)+U_h = \bigcup_{-h \leq i < d+h}
H_i(\fp)$. $X$ is invariant under many shifts. Namely, $X=X^{(u)}$
for any $u \in \{0\} \times \fp^{n-1}$, and for $0 \leq i < j \leq
d$, $X_j = X_i^{(u)}$ for any $u \in \{j-i\} \times \fp^{n-1}$. We
have $\# X(\fp)= d p^{n-1}$, $\# (X(\fp)+U_h) = (d+2h) p^{n-1}$, and
$$
\Delta = p^{n-1} ( d(2h+1)^n - (d+2h)).
$$
Since $\Delta \neq 0$, there is no ``small'' upper bound on $\Delta$,
namely of order less than ${n-1} = \dim X$ in $p$.
In particular, if $X$ is a single hyperplane, then $\# (X(\fp)+U_h)
= (2h+1)p^{n-1}$. Its difference $\Delta
=((2h+1)^n - (2h+1))p^{n-1}$
with $\#X(\fp) \cdot \# U_h$ is of the same order of magnitude in $p$
as its two arguments, that is, not ``small''.
\exend
\end{example}

\begin{example}
\label{independentVariables}
Generalizing Example \ref{parallelHyperplanes}, we consider a
variety $Y \subset \A^n$ whose defining polynomials
are independent of the variables $x_{n-m+1},$ $\ldots, x_n$, for
some $m$ with  $1 \leq m<n$. We may also consider them as elements of
$\fq[x_1,\ldots,x_{n-m}]$,
they define a variety $Y' \subseteq \A^{n-m}$, and $Y(\fq) = Y'(\fq)
\times \fq^{m}$.
We consider the embedding
\begin{equation}
\begin{aligned}
e & \colon \A^{n-m} \hookrightarrow \A^n  \\ \XX' & \mapsto (\XX',0,\ldots,0),
\end{aligned}
\end{equation}
take some $V \subseteq \fq^n$,
and let $V' = e^{-1} (V)$.
Then $Y+V = (Y'+V') \times \A^{m}$.

%
If we write $\Delta' = \#Y'(\fq) \#V' - \#(Y'(\fq) + V')$
and assume that $V = V' \times V''$ for some $V'' \subseteq \fq^{m}$ with
$\# V'' \geq 2$, then
\begin{equation}
\label{fewVariables} \Delta = q^{m} \bigl( \#(Y'(\fq)+V') \cdot
(\#V'' -1) + \Delta' \#V'' \bigr).
\end{equation}
We will modify this reasoning in Theorem \ref{notShiftFree} to show that under a certain condition,
no ``small'' upper bound on $\Delta$ exists.
\exend
\end{example}

\begin{example}
\label{boundedRank}
For $m,n\in\N$ and $s<\min\{m,n\}$, we consider the
``determinantal'' $\fq$-variety $M_s$ of matrices in $\A^{m\times
n}$ of rank at most $s$. It is well-known that $M_s$ is absolutely
irreducible with
$$r=\dim M_s=s(m+n-s),\quad d=\deg M_s=
\prod_{0\leq i < n-s}\frac{\binom{m+i}{s}}{\binom{s+i}{s}};$$
see, e.g., \cite[Proposition 1.1]{BrVe88} for the first assertion
and \cite[Example 19.10]{Harris92} for the second one.
A simple calculation reveals that those factors decrease monotonically with growing $i$,
so that the term for $i=0$ dominates and $d \leq \binom{m}{s}^{n-s}$.

In view of Theorem \ref{th:EstimateGenTubeVarieties}, we check that
$M_s$ is not shift-invariant. Let $u\in\fq^{m\times
n}\setminus\{0\}$ and consider $M_s^{(u)}$. As the zero matrix $0$
belongs to $M_s$, if $0+u=u$ is in $M_s$, then $t=\mathrm{rank}\,
u\le s$. Let $C\in\fq^{m\times m}$ be an invertible matrix such that
the last $m-t$ rows of $C\cdot u$ are equal to zero. Let
$A\in\fq^{m\times n}$ be a matrix whose first $t$ rows and last
$m-s-1$ rows are zero, and the remaining $m-t-(m-s-1)=s-t+1$ rows,
together with the first $t$ rows of $C \cdot u$, are linearly
independent. Such an $A$ exists, since $t+s-t+1 = s+1 \leq
\min\{m,n\}$. Then
\begin{align*}
\mathrm{rank}\,A&=s-t+1\le s,\\
\mathrm{rank}(C\cdot u+A)&=t+s-t+1=s+1.
\end{align*}
It follows that $C^{-1}A\in M_s$ and $C^{-1}(C\cdot
u+A)=u+C^{-1}A\notin M_s$, so that $M_s\not=M_s^{(u)}$, and thus
$M_s$ is $\fq^{m \times n}$-shift-free. Applying Theorem
\ref{th:EstimateGenTubeVarieties} we obtain for $U \subseteq
\fq^{m\times n}$
\begin{equation}
\label{boundRank}
\Delta \leq \# U \# (U-U) d^2 q^{r-1}.
\end{equation}
\hfill$\lozenge$
\end{example}

As a further example, we consider the variety of decomposable univariate polynomials.
For a univariate polynomial $f=a_dx^d+\cdots+a_1x+a_0$ in a
polynomial ring $R[x]$ over a ring $R$ 
and $k\in\N$, its {\em $k$th Hasse derivative} $\mathcal{D}^{(k)}f$
is
\begin{equation}
\label{defHasse}
\mathcal{D}^{(k)}f=\sum_{k \leq i \leq d} \binom{i}{k}a_ix^{i-k}.
\end{equation}
Since $\binom i k \binom {i-k} \ell = \binom{k+\ell} \ell \binom i {k+\ell}$
in the usual ranges for binomial coefficients,
we have
$\mathcal{D}^{(k)}\circ \mathcal{D}^{(\ell)} =
\binom {k+\ell} \ell \mathcal{D}^{(k+\ell)}$.

\begin{example}
\label{ex:decomposables}
A univariate polynomial $f = f_n x^n + \cdots + f_0 \in F[x]$ of degree $n$ over a field $F$ is
\emph{decomposable} if there exist $g,h \in F[x]$ of degrees $\ell, m \geq 2$, respectively, with $f = g \circ h$.
Then $n = \ell m$, and denoting their coefficients by $g_i$ and $h_j$, respectively, we also have
for the monic (leading coefficient 1) and original (constant coefficient 0, graph containing the origin)
polynomial $h' = h_m^{-1} (h-h_0)$:
$$
f = g \circ h = g \circ ((h_m x+ h_0) \circ h' ) = (g \circ (h_m x +
h_0)) \circ h'.
$$
We may thus assume that $h$ is monic original. Then $g_\ell = f_n$.
We might further normalize $f$ into the monic original $f' = f_n^{-1}(f-f_0)$, so that in
$f' = g' \circ h'$ all three polynomials are monic original, with the appropriate $g'$,
but do not use this here.

All such polynomials $f$, $g$, and $h$ are parametrized by their
coefficients in $\A^{n+1}$, $\A^{\ell+1}$, and $\A^{m-1}$,
respectively. The Zariski closure of the image of the
\emph{composition map} $\gamma\colon \A^{\ell +m} \rightarrow \A^{n+1}$
with $(g,h) \mapsto g \circ h$ is the set of decomposable
polynomials. This is an absolutely irreducible closed affine
subvariety $C_{n,\ell}$ of $\A^{n+1}$, of dimension $\ell +m$ and
with degree $d \leq \ell^{\ell+m-2}$; see \cite{gatmat18}.

In the remainder of this example, we assume that
$F$ is a finite
field $\fq$ of characteristic greater than $n$ and that $q$ is sufficiently large (compared to $n$).

We want to show that $C_{n,\ell}$ is $\fq^n$-shift-free. It is
sufficient to exhibit for every nonzero $u \in \A^{n+1}$ some $f \in
C_{n,\ell}$ so that $u+f \not \in C_{n,\ell}$. Addition here is the
standard coefficient-wise addition of polynomials.

By
the chain rule, the Hasse derivative $\mathcal{D} f = \mathcal{D}^{(1)} f$ of any $f=g\circ h$ with $\deg
f=n$, $\deg g=\ell$ and $\deg h=m$ has a factor $\mathcal{D}h$ of
degree $m-1$. We consider the set $\fq[x]_{< n}$ of polynomials of
$\fq[x]$ of degree at most $n-1$. We claim that the set of
$f\in\fq[x]_{< n}$ having a factor in $\fq[x]$ of degree $m-1$
is Zariski closed. Indeed, fix a factorization pattern
$1^{\lambda_1}\cdots(m-1)^{\lambda_{m-1}}$ for polynomials of degree
$m-1$, where $\lambda_1,\ldots,\lambda_{m-1}\in\Z_{\ge 0}$ are such
that $\lambda_1+\cdots+(m-1)\lambda_{m-1}=m-1$. For each $i\in\N$,
let $s_i\in\fq[x]$ be the product of all irreducible polynomials
of $\fq[x]$ of degree $i$. Then $f$ has a factor in $\fq[x]$ with
factorization pattern $1^{\lambda_1}\cdots(m-1)^{\lambda_{m-1}}$ if
and only if $\gcd(f,s_i^{n-1})$ has degree at least $\lambda_i$ for
$1\le i <m$. Further, the latter is equivalent to the vanishing
of the first $\lambda_i$ subresultants of $f$ and $s_i^{n-1}$ for
$1\le i < m$. This shows that the set of elements of $\fq[x]_{< n}$
having a factor in $\fq[x]$ with factorization pattern
$1^{\lambda_1}\cdots(m-1)^{\lambda_{m-1}}$ is Zariski closed.
Considering all possible factorization patterns for polynomials of
degree $m-1$, the claim follows.

Since the set of all $f\in\fq[x]_{\le n}$ as above
 is Zariski dense in $C_{n,\ell}$ and each
$\mathcal{D}f$ has a factor in $\fq[x]$ of
degree $m-1$, we conclude that the derivatives $\mathcal{D}f$ of all
$f\in C_{n,\ell}$ satisfy this closed condition.

We consider a nonzero $u \in \fq^{n+1}$, supposing first that $\deg u=n$
and $\mathcal{D}u(0)\not=0$. Any
$f=\sum_{0 \leq i \leq \ell}\lambda_ix^{im}$ with
$\lambda_0,\ldots,\lambda_\ell \in\fq$ belongs to $C_{n,\ell}$ and it suffices
to prove that there exists such an $f$ with $u+f\notin
C_{n,\ell}$. When $\lambda_0,\ldots,\lambda_\ell$ vary over $\fq$, the set
of polynomials
$$\mathcal{F}:=\bigl\{\mathcal{D}u+\sum_{1 \leq i \leq \ell}
im\lambda_ix^{im-1}:\lambda_0,\ldots,\lambda_\ell\in\fq\bigr\}$$
constitutes a linear family with prescribed coefficients in the
sense of, e.g., \cite[\S 3.5]{MuPa13}. Arguing by contradiction,
assume that $u+f\in C_{n,\ell}$ for any $f$ as above. Denote by
$\mathcal{N}=\{0,\ldots,n-1\}\setminus \{m-1,2m-1,\ldots,\ell m-1\}$
the set of exponents corresponding to the monomials having a
prescribed value. Then $\gcd(\mathcal{N})=1$, and
\cite[Theorem 1]{Cohen72} shows that for sufficiently large $q$,
there exists a
polynomial $f$ so that $\mathcal{D}(u+f)\in\mathcal{F}$ is irreducible in
$\fq[x]$. According to our previous remarks, this contradicts our
assumption $u+f\in C_{n,\ell}$.

For the remaining case, where $\deg u<n$,
we add a fixed term $\lambda_nx^n$ with $\lambda_n\not=0$ and let the
remaining terms vary, while if $\mathcal{D}u(0)=0$, we make a
similar argument considering the Taylor expansion of $\mathcal{D}u$
in powers of $x-\alpha$ and the set of elements
$f=\sum_{0 \leq i \leq \ell} \lambda_i(x-\alpha)^{im}$ with
$\lambda_0,\ldots,\lambda_\ell\in\fq$, for a suitable
$\alpha\in\fq\setminus\{0\}$.

Thus $C_{n,\ell}$ is $\fq^n$-shift-free and the estimate of Theorem
\ref{th:EstimateGenTubeVarieties} (\ref{shiftFreeBound}) applies.
\exend
\end{example}

\section{Shift-invariant varieties}\label{subsec:ShiftInvariantVarieties}

In order to apply Theorem \ref{th:EstimateGenTubeVarieties} (\ref{shiftFreeBound})
to an
$\fp$-variety $X \subseteq \A^n$,
the critical point is to check whether $X$ is $(U-U)$-shift-free.
We say for some $u\in\fq^n\setminus\{0\}$ that $X$ is {\em
$u$-shift-invariant} if $X = X^{(u)}$. $X$ is \emph{shift-invariant}
if it is $u$-shift-invariant for some nonzero $u \in \fq^n$. $X$ is
a \emph{cylinder in the direction of $u$} if for any $\XX\in X$ and $t\in
\cfq$, $\XX+tu \in X$. We have the following characterization of
shift-invariance.
\begin{proposition}\label{prop:CharactShiftInvariantVarieties}
Let $p>d$, $q$ a power of $p$, let $X\subset\A^n$ be an $\fq$-variety of degree
$d$ and $u\in\fq^n\setminus\{0\}$. Then $X$ is $u$-shift-invariant if
and only if $X$ is a cylinder in the direction of $u$.
\end{proposition}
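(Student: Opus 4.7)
The plan is to prove the two implications separately. For the easy direction, suppose $X$ is a cylinder in the direction of $u$. Then for any $a \in X$, taking $t=-1$ gives $a-u \in X$, so $a = (a-u)+u \in X^{(u)}$; this shows $X \subseteq X^{(u)}$. Similarly, for any $b \in X^{(u)}$, write $b = c+u$ with $c \in X$, and the cylinder property (with $t=1$) applied to $c$ yields $b \in X$. Hence $X = X^{(u)}$.

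For the hard direction, suppose $X = X^{(u)}$. Fix an arbitrary $a \in X \subseteq \A^n(\cfq)$. Since $a \in X = X^{(u)}$, we have $a-u \in X$; iterating this observation (using $X = X^{(u)}$ repeatedly) and running the same argument in the reverse direction using $X^{(-u)} = X$, I obtain $a+ku \in X$ for every $k \in \Z$. Now I would observe that, because $\mathrm{char}\,\cfq = p$ and $u \neq 0$, the set $\{k u : k \in \Z\} \subset \cfq^n$ has exactly $p$ distinct elements, so the line
\[
L_a = \{a + tu : t \in \cfq\} \subseteq \A^n
\]
contains at least $p$ points of $X$. The line $L_a$ is an absolutely irreducible $\cfq$-variety of dimension $1$ and degree $1$, so $X \cap L_a$ is a closed subvariety of $L_a$ and is therefore either finite or all of $L_a$.

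If $X \cap L_a$ were finite, the B\'ezout inequality \eqref{eq:BezoutInequality} would give
\[
\#(X \cap L_a) = \deg(X \cap L_a) \le \deg X \cdot \deg L_a = d,
\]
contradicting the hypothesis $p > d$ since we have already exhibited at least $p$ points in $X \cap L_a$. Therefore $X \cap L_a = L_a$, which is exactly the statement that $a + tu \in X$ for every $t \in \cfq$. As $a$ was arbitrary in $X$, this proves that $X$ is a cylinder in the direction of $u$.

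The main obstacle I anticipate is the clean justification that $\{ku : k \in \Z\}$ yields $p$ distinct elements in characteristic $p$: this uses $u \neq 0$ together with the fact that the prime field $\fp$ sits inside $\cfq$, so scaling $u$ by representatives of $\Z/p\Z$ produces $p$ distinct translates along $L_a$. Once this is in place, combining it with B\'ezout and the irreducibility of $L_a$ gives the line-inclusion $L_a \subseteq X$, and the cylinder property follows immediately. The hypothesis $p > d$ (rather than only $q > d$) is precisely what is needed to make the B\'ezout contradiction go through.
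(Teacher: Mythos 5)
Your proof is correct and follows essentially the same route as the paper: fix a point, use repeated shift-invariance to produce $p$ distinct points on the line through it in direction $u$, invoke B\'ezout (with $p>d$) to rule out a finite intersection, and conclude the line lies in $X$; the converse is immediate. Your version merely spells out the easy direction and the counting of $\Z$-multiples of $u$ in characteristic $p$, which the paper leaves implicit.
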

\begin{proof}
Suppose that $X$ is invariant under a shift
$u\in\fq^n\setminus\{0\}$. Let $\XX$ be an arbitrary point of $X$ and
consider the line $\ell_\XX=\{\XX+tu:t\in\cfq\}$. Since $X$ is invariant
under the shift $u$, it is also invariant under $2u, 3u, \ldots, (p-1)u$.
Thus
$$\#(X\cap\ell_\XX)\ge \#\{\XX+tu:t\in\fp\}= p>d.$$
If $\dim(X\cap\ell_\XX)=0$, then by the B\'ezout inequality
\eqref{eq:BezoutInequality} we would have
$$\#(X\cap\ell_\XX)=\deg(X\cap\ell_\XX)\le \deg X=d,$$
which contradicts the previous inequality. It follows that
$$0<\dim(X\cap\ell_\XX)\le\dim \ell_\XX=1,$$
so that $\dim(X\cap\ell_\XX)=1$. The fact that the variety
$X\cap\ell_\XX$ of dimension 1 is contained in the absolutely
irreducible variety $\ell_\XX$ of dimension 1 shows that
$X\cap\ell_\XX=\ell_\XX$, that is, $\ell_\XX\subseteq X$. Since this holds
for any $\XX\in X$, we conclude that $X$ is a cylinder in the
direction of $u$.

The converse assertion is clear.
\end{proof}

We can reformulate the condition of shift-invariance as follows.
\begin{corollary}\label{coro:CharactShiftInvariantVarietiesWithIdeals}
With hypotheses as in Proposition
\ref{prop:CharactShiftInvariantVarieties}, $X$ is shift-invariant if
and only if there exists
an invertible map
$L \colon \A^n\to\A^n$
of linear forms in $\fq[x_1, \ldots, x_n]$ such that
$L(X)=Y\times\A^1$ for some $\fq$-variety $Y\subseteq \A^{n-1}$. If
this is the case and $I(Y) \subset\cfq[y_1,\ldots,y_{n-1}]$ is the
ideal of $Y$, then $I(Y\times\A^1)=I(Y) \, \cfq[y_1,\ldots,y_n]$.
\end{corollary}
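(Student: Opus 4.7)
The plan is to reduce the problem, via Proposition \ref{prop:CharactShiftInvariantVarieties}, to exhibiting a linear isomorphism that aligns the cylinder axis with a coordinate axis. Suppose first that $X$ is shift-invariant, so $X = X^{(u)}$ for some $u \in \fq^n \setminus \{0\}$; the proposition then gives that $X$ is a cylinder in the direction of $u$. I would extend $\{u\}$ to an $\fq$-basis $\{v_1, \ldots, v_{n-1}, u\}$ of $\fq^n$ and define $L \colon \A^n \to \A^n$ to be the $\fq$-linear map determined by $L(v_i) = e_i$ for $1 \leq i < n$ and $L(u) = e_n$, where $e_1, \ldots, e_n$ is the standard basis. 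By construction, the components of $L$ are linear forms in $\fq[x_1, \ldots, x_n]$, $L$ is invertible, and $L(X)$ is a cylinder in direction $e_n$, since $L(x + tu) = L(x) + t e_n$ for every $x \in X$ and $t \in \cfq$.

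Next I would set $Y := \{(y_1, \ldots, y_{n-1}) \in \A^{n-1} : (y_1, \ldots, y_{n-1}, 0) \in L(X)\}$. Because $L(X)$ is a cylinder in direction $e_n$, a point $(y_1,\ldots, y_n)$ lies in $L(X)$ if and only if $(y_1, \ldots, y_{n-1}) \in Y$; hence $L(X) = Y \times \A^1$. To see that $Y$ is an $\fq$-variety, observe that $L(X) \cap \mathcal{V}(y_n)$ is an $\fq$-variety, and $Y$ is its image under the canonical $\fq$-isomorphism $\mathcal{V}(y_n) \cong \A^{n-1}$ that drops the (identically zero) last coordinate.

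For the converse direction, given such an $L$ with $L(X) = Y \times \A^1$, I would set $u = L^{-1}(e_n) \in \fq^n \setminus \{0\}$; since $Y \times \A^1$ is tautologically shift-invariant under $e_n$, applying $L^{-1}$ shows that $X$ is invariant under the shift $u$, and hence shift-invariant. For the final ideal identity, the inclusion $I(Y)\,\cfq[y_1, \ldots, y_n] \subseteq I(Y\times\A^1)$ is immediate, since a polynomial in $I(Y)$ is independent of $y_n$ and vanishes on $Y$. Conversely, given $f \in I(Y \times \A^1)$, I would write $f = \sum_i f_i(y_1, \ldots, y_{n-1}) y_n^i$; for each $a \in Y$ the univariate polynomial $f(a, y_n) = \sum_i f_i(a) y_n^i$ vanishes on all of $\cfq$, and since $\cfq$ is infinite its coefficients must all vanish, giving $f_i \in I(Y)$ for every $i$ and thus $f \in I(Y)\,\cfq[y_1,\ldots,y_n]$.

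I expect none of these steps to be a genuine obstacle, since Proposition \ref{prop:CharactShiftInvariantVarieties} already does the geometric heavy lifting. The corollary's role is essentially to record that once the cylinder direction lies along a coordinate axis the variety splits as a product in that coordinate, and that this splitting is reflected at the level of ideals. The only delicate point worth double-checking is that $L$ can indeed be chosen over $\fq$ (not merely over $\cfq$), which is why the assumption $u \in \fq^n$ built into the definition of shift-invariance is essential.
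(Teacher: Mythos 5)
Your proof is correct and takes essentially the same approach as the paper: invoke Proposition \ref{prop:CharactShiftInvariantVarieties} to get the cylinder structure, then change coordinates by an $\fq$-linear isomorphism sending the shift direction to a coordinate axis, and read off $Y$ as the resulting base. The paper writes $L$ out explicitly (a coordinate permutation followed by the specific map \eqref{linForm}) whereas you construct it abstractly by basis extension, and you spell out the converse direction and the ideal identity which the paper treats as clear, but these are only differences in presentation.
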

\begin{proof}
We assume that $X=X^{(w)}$ with some nonzero $w \in \fq^n$. For ease
of presentation, we apply a coordinate permutation $C$ so
that $(Cw)_n \neq 0$, and now assume $w_n\neq 0$. We define the
vector of linear forms
\begin{equation}
\label{linForm}
L =
\big(x_1- \frac{w_1 x_n}{w_n},  \ldots , x_{n-1}- \frac{w_{n-1} x_{n}}{w_n}, \frac{x_n}{w_n} \big) \in (\fq[x_1, \ldots, x_n])^n
\end{equation}
and also denote the induced mapping as
$$
L \colon \A^n \rightarrow \A^n.
$$
The linear forms in $L$ are linearly independent. 
Let $N$ be $L$ followed by the projection to the first $n-1$ coordinates,
and $Y = N(X)$.
Thus $N(w) = 0$.
We claim that $L(X) = Y \times \A^1$.

The inclusion ``$\subseteq$'' is clear. So let $b \in Y$ and $c\in \A^1$, and $b=N(a)$
for some $a\in X$.
As in the proof of Proposition \ref{prop:CharactShiftInvariantVarieties}, the line
$\{a+t w \colon t\in\A^1\}$ is contained in $X$. Then
\begin{align*}
L\big(a+(c- \frac{a_n}{w_n})w\big)&
=\big(N(a+(c-\frac{a_n}{w_n})w),(a_n+(c-\frac{a_n}{w_n})w_n)/w_n \big)\\
&=\big(N(a),\frac{a_n}{w_n}+c-\frac{a_n}{w_n})\big)=(b,c),
\end{align*}
which shows the claim. The invertible map in the proposition is $L \circ C$.

  Finally, the identity
$I(Y\times\A^1)=I(Y) \, \cfq[y_1,\ldots,y_{n-1}]$ is a standard fact on
ideals of varieties.
\end{proof}

In the next result, a subset $U\subset\fq^n$ is called {\em closed
under shifts to zero} if for any $u\in U$, replacing any coordinate
of $u$ by zero yields an element of $U$. We remark that the standard
neighborhood $U_h\subset\fp^n$ is closed under shifts to zero.
Finally, we recall $\Delta$ from (\ref{eq:LowerBound}).

\begin{theorem}
\label{notShiftFree}
Let $p$ be a prime, $p > d \geq 1$, $U \subseteq \fp^n$ closed under
shifts to zero, and $X$ be an absolutely irreducible $\fp$-variety
of dimension $r$ and degree $d$ which is not $(U-U)$-shift-free.
Furthermore, let
\begin{equation}
\label{alpha} \alpha = d^2+ (5d^{{13}/{3}}+d^2
\#(U-U))p^{-{1}/{2}}
\end{equation}
and assume that $p \geq 4\alpha^2$.
Then
\begin{equation}
\Delta \geq p^r/2.
\end{equation}
\end{theorem}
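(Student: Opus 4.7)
The plan is to exploit the shift-invariance of $X(\fp)$ directly, without invoking the cylinder structure of Corollary \ref{coro:CharactShiftInvariantVarietiesWithIdeals}. Since $X$ is absolutely irreducible, saying that $X$ is not $(U-U)$-shift-free unpacks to the existence of a nonzero $u^* \in U-U$ with $X = X^{(u^*)}$. Because $u^* \in \fp^n$, this identity passes to $\fp$-rational points and yields $X(\fp) = X(\fp) + u^*$, so $X(\fp)$ is translation-invariant under $u^*$. I would then write $u^* = v^* - w^*$ with $v^*, w^* \in U$ (necessarily distinct since $u^* \neq 0$) and observe that
\[
X(\fp) + v^* = \bigl(X(\fp) + u^*\bigr) + w^* = X(\fp) + w^*,
\]
so in the family of translates $\{X(\fp)+u : u \in U\}$ at least two members coincide. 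The union $X(\fp)+U$ is therefore a union of at most $\#U - 1$ sets of size $\#X(\fp)$, which gives $\#(X(\fp)+U) \leq (\#U - 1)\,\#X(\fp)$ and hence $\Delta \geq \#X(\fp)$.

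The remaining task is a quantitative lower bound $\#X(\fp) \geq p^r/2$. For this I would apply Fact \ref{generalUpperBounds}(\ref{th:EstimateRatPointsVariety}) with $\sigma = 1$ and $D = d$, use $(d-1)(d-2) \leq d^2$, and absorb $5d^{13/3}+d^2$ into $5d^{13/3}+d^2\#(U-U)$ via $\#(U-U) \geq 1$, to obtain
\[
\#X(\fp) \geq p^r - p^{r-1/2}\bigl(d^2 + (5d^{13/3}+d^2\#(U-U))p^{-1/2}\bigr) = p^r\bigl(1 - \alpha\, p^{-1/2}\bigr).
\]
The hypothesis $p \geq 4\alpha^2$ then forces $\alpha p^{-1/2} \leq 1/2$, so $\#X(\fp) \geq p^r/2$ and $\Delta \geq p^r/2$, as claimed.

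No serious obstacle is anticipated here: the argument is little more than a pigeonhole on translates followed by careful constant-matching against the definition of $\alpha$. The only subtle point is recognizing that $u^* \in \fp^n$ allows the varietal identity $X = X^{(u^*)}$ to descend to the rational points without loss. It is worth noting that the hypothesis ``$U$ closed under shifts to zero'' is not actually used in this direct approach; I suspect the authors may prefer a cylinder-style proof along the lines of Example \ref{independentVariables}, where after applying Corollary \ref{coro:CharactShiftInvariantVarietiesWithIdeals} to reduce to $Y \times \A^1$, the product structure makes that hypothesis genuinely convenient for decomposing $U$ relative to the distinguished coordinate.
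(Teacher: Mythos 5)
Your proof is correct, and it takes a genuinely different — and considerably simpler — route than the paper's. The paper's proof takes the $u$ with $X = X^{(u)}$, invokes Corollary \ref{coro:CharactShiftInvariantVarietiesWithIdeals} to recognize $X$ as a cylinder, iterates this to build a linear change of coordinates $M$ with $M(X) = Y \times \A^m$ where $Y$ is absolutely irreducible and $(V-V)$-shift-free for the projected neighborhood $V$, verifies carefully (with an induction) that $V$ inherits ``closed under shifts to zero'' from $U$ and that $z \in U-U$ implies $z=0$, then applies the Weil estimate from Corollary \ref{coro:EstimateGenTubeVarieties} to $Y$ and $V$, and finally assembles $\Delta$ via the product formula \eqref{fewVariables}. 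Your argument bypasses all of that: from a single $u^* = v^* - w^* \in U-U$ with $X = X^{(u^*)}$ you get the coincidence of translates $X(\fp) + v^* = X(\fp) + w^*$, which immediately yields $\#(X(\fp)+U) \leq (\#U-1)\#X(\fp)$ and hence $\Delta \geq \#X(\fp)$. The remaining numerics — Fact \ref{generalUpperBounds}(\ref{th:EstimateRatPointsVariety}) with $\sigma=1$, $D=d$, $(d-1)(d-2)\leq d^2$, $\#(U-U)\geq 1$, and $p\geq 4\alpha^2$ — match the paper's constant $\alpha$ exactly and give $\#X(\fp) \geq p^r(1-\alpha p^{-1/2}) \geq p^r/2$. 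Your observation that ``closed under shifts to zero'' is unused is accurate; that hypothesis is genuinely consumed in the paper's cylinder-based route (to propagate the closure property to $V$ across the change of coordinates and to show $z \in U-U$), but your pigeonhole sidesteps the need for it entirely. One side remark: the paper's intermediate bound $\Delta > (\#U - \#V)(p^r - \alpha p^{r-1/2})$ is potentially stronger than $\#X(\fp)$ when the shift group is large (so $\#V \ll \#U$), much as your bound could be sharpened to $(k-1)\#X(\fp)$ if $k$ translates coincide; but both routes are weakened to the same stated $p^r/2$, so this is a matter of what each proof leaves on the table, not of correctness.
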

\begin{proof}
The mapping given by $X \mapsto X^{(u)}$ for an $\fp$-variety $X$ constitutes an
action of the additive group $\fp^n$ on the set of $\fp$-varieties,
since $(X^{(u)})^{(u')} = X^{(u+u')}$. We let $B \subseteq U-U$ be a
basis of the subgroup generated by the $u\in U-U$ with $X =
X^{(u)}$. This subgroup is an $\fp$-vector space of some dimension $m$ and we
write $B = \{ b_1, \ldots, b_m \}$.
We take the invertible linear map $L_1 \colon \A^n \rightarrow
\A^{n-1} \times \A^1$ from (\ref{linForm})
with $w=b_1$,
ignoring for ease of presentation the possibly required coordinate permutation.
Thus $L_1(X) = Y_1 \times \A^1$ and $L_1(b_1)
= (0, \ldots ,0,1)$, for some subvariety $Y_1$ of $\A^{n-1}$. Also,
$N_1$ is $L_1$ followed by the projection to the first $n-1$
coordinates.

We claim that $V_1 = N_1(U)$ is closed under shifts to zero.
So let $u = (u_1, u_2, \ldots, u_n) \in U$, $v = (v_1, v_2, \ldots, v_{n-1}) = N_1(u)$ be an arbitrary element of $V_1$,
and consider annihilating its first coordinate.
Since $U$ is closed under shifts to zero, also $u' = (0, u_2, \ldots, u_{n-1}, 0) \in U$ and
$(0, v_2, \ldots, v_{n-1}) = N_1(u') \in V_1$.

If $m \geq 2$, we claim that $Y_1^{(N_1(b_2))} = Y_1$. Writing $L_1(b_2) = (b_2', b_2'')$ with
$b_2' = N_1(b_2) \in \A^{n-1}$ and $b_2'' \in \A^1$, we have
\begin{align*}
Y_1 \times \A^1 & = L_1(X) = L_1(X^{(b_2)}) = (L_1(X))^{(L_1(b_2))} \\
& = (Y_1 \times \A^1)^{(b_2', b_2'')} = Y_1^{(b_2')} \times \A^1.
\end{align*}
Thus $Y_1 = Y_1^{(b_2')}$, and we can again apply Corollary
\ref{coro:CharactShiftInvariantVarietiesWithIdeals} to find $L_2
\colon \A^{n-1} \rightarrow \A^{n-1}$ and $Y_2 \subseteq \A^{n-2}$
with $L_2(Y_1) = Y_2 \times \A^1$ and $L_2(b_2') = (0, \ldots,
0,1)$. This yields an invertible linear map $M_2 = L_2 \times \id
\colon \A^n \rightarrow \A^n$ with $(M_2\circ L_1)(X) = Y_2 \times
\A^2$. The last two entries of
$((M_2\circ L_1)(b_1),(M_2\circ L_1)(b_2))$ have the lower
antitriangular form
\begin{equation}
\label{triangular}
\left(
\begin{tabular}{lr}
0 & 1\\
1 & *
\end{tabular}
\right),
\end{equation}
with  some value $*$ below the antidiagonal.

Continuing in this way for a total of $m$ steps, we obtain an
invertible linear map $M = M_m \circ M_{m-1} \circ \cdots \circ L_1
\colon \A^n \rightarrow \A^n$, where $M_j = L_j \times \id_{j-1}$
and $\id_k \colon \A^k \rightarrow \A^k$ is the identity map, with
$M(X) = Y \times \A^m$ for some variety $Y$ in $\A^{n-m}$.
Furthermore, $Y$ is absolutely irreducible of dimension $r-m$ and
degree $d$. The last $m$ columns of
$(M(b_1), \ldots, M(b_m))$ have a shape as in (\ref{triangular}),
namely lower antitriangular with 1 on the antidiagonal, zeroes above
it, and arbitrary values below it. We let $V \subseteq \fp^{n-m}$ be
the projection of $ M(U)$ to the first $n-m$ coordinates. Applying
inductively the argument for $V_1$ from above, it follows that also
$V$ is closed under shifts to zero.

We claim that $Y$ is $(V-V)$-shift-free.
So let $v\in V-V$ with $Y = Y^{(v)}$ and $z=M^{-1} (v,0)$. Then
$$
M(X) = Y \times \A^m = Y^{(v)} \times \A^m = (Y\times \A^m)^{(v,0)}
= M(X)^{(M(z))}= M(X^{(z)}),
$$
and hence $X = X^{(z)}$.
For the last equation in the above, we have for any $a\in \A^n$:
\begin{align*}
a \in M(X)^{(M(z))}  &\Longleftrightarrow a-M(z) \in M(X) \Longleftrightarrow M^{-1}(a) -z \in X\\
& \Longleftrightarrow M^{-1}(a) \in X^{(z)} \Longleftrightarrow a \in  M(X^{(z)}).
\end{align*}
Furthermore, we claim that $z \in U-U$ and
$z_{n-m+1} = \cdots = z_n =0$, and use downward induction on $m$ to
show this. We first consider the situation $m=1$ in Corollary
\ref{coro:CharactShiftInvariantVarietiesWithIdeals}, so that
$V=N(U)$, and write $v = v_1 - v_2$ with $v_1, v_2 \in V \subseteq
\fp^{n-1}$.
Since $V$ is closed under shifts to zero, also $(v_i,0) \in V$
and $z_i = L^{-1} (v_i,0) \in U$ for $i \in \{1, 2\}$.
$L$ is a linear map and therefore $z = z_1 - z_2 \in U-U$.
Thus the claim follows in this situation, and
in general by induction. 

Thus $z$ is a linear combination, say $z = \sum_{1\leq i \leq m} \lambda_i b_i$
with all $\lambda_i \in \fp$,
of the basis vectors $b_i$, and also $M(z) = \sum_{1\leq i \leq m} \lambda_i M(b_i)$.
The last $m$ coordinates of $M(z)$ are zero by the above claim and those of the $M(b_i)$ have antitriangular
shape, with 1 on the diagonal, so that all $\lambda_i$ are zero.
It follows that $z=0$ and $v=0$, and therefore $Y$ is $(V-V)$-shift-free.

We now follow the reasoning in Example \ref{independentVariables}.
Thus we write
$\Delta' = \#Y(\fp) \#V - \#(Y(\fp) + V)$ and  $k = \# U / \# V$.
Then $\# X(\fp) = p^m \# Y(\fp)$
and
\begin{equation}
\label{X+U}
M(X+U) = M(X) + M(U) = (Y \times \A^m) + M(U) = (Y+V) \times \A^m.
\end{equation}
For the last equation, we first take some $b\in Y$, $c \in \A^m$, and $u = (u_1, \ldots, u_n) \in M(U)$.
Then $v = (u_1, \ldots, u_{n-m}) \in V$ and
$$
(b,c) + u = ( (b+v, c+(u_{n-m+1}, \ldots, u_n) ) \in (Y+V) \times \A^m.
$$
For the reverse inclusion, we take $b \in Y$, $v\in V$, and any
$u = (u_1, \ldots, u_n) \in M(U)$ with $(u_1, \dots, u_{n-m}) = (v_1, \dots, v_{n-m})$.
Then
$$
(b+v, c) = (b, c-(u_{n-m+1}, \ldots, u_n)) + (v,u) \in (Y \times \A^m) + M(U).
$$
 In particular, we have $\# (X(\fp) + U) =  p^m \#(Y(\fp) + V)$.

Since $b_1 \in U-U$, we also have $-b_1 \in U-U$
and $N_1(-b_1) = N_1(b_1) = 0$.
Thus $\#V < \#U$ and $\#(V-V) < \# (U-U)$.

Since $Y$ is $(V-V)$-shift-free and absolutely irreducible of dimension $r-m$ and degree $d$,
Corollary \ref{coro:EstimateGenTubeVarieties} below implies that
$$
\#(Y(\fp)+V) > p^{r-m} {\#V} (1 - \alpha p^{-{1}/{2}}).
$$

Now $\Delta' \geq 0$ and we have
\begin{equation}
\label{fewVariables2}
\begin{aligned}
\Delta & = \# X(\fp) \#U - \#(X(\fp) + U) \\ & = p^m \bigl( k \Delta' + (k -1) \#(Y(\fp)+V)  \bigr) \\
& \geq p^m (k -1) \#(Y(\fp)+V) \\
& > (\# U - \# V) (p^r - \alpha p^{r-{1}/{2}}) \geq p^r - \alpha
p^{r-{1}/{2}} \geq p^r/2.
\end{aligned}
\end{equation}
\end{proof}

The condition $p \geq 4\alpha^2$ has a wide range of solutions, for example $d\geq 13$,
$\# U \leq d^\gamma$ for some real $\gamma \geq 1.5$, and $p\geq 9 d^{2+2 \gamma}$.
Then $\# (U-U) \leq d^{2\gamma}$, $5d^{{13}/{3}} \leq d^5 \leq
d^{2+2 \gamma}$, $d^2 \leq  5 6 d^{1+\gamma}$,
\begin{align*}
4 \alpha^2 & \leq 4 (d^2 + 2d^{2+2 \gamma} p^{-{1}/{2}})^2 \leq 4 (d^2 +2d^{2+2\gamma} d^{-1-\gamma}/3)^2 \\
& = 4 (d^2 + 2d^{1+\gamma}/3)^2 \leq 4 (3d^{1+\gamma}/2)^2 = 9
d^{2+2\gamma} \leq  p.
\end{align*}
The bound in
Theorem \ref{th:EstimateGenTubeVarieties} (\ref{shiftFreeBound})
is roughly $(\# U)^3 d^2 p^{r-1}$, if $\# (U-U)$ is about $(\# U)^2$.
For this to be less than the first argument $\# X(\fp) \# U \approx p^r \#U$ of $\Delta$,
we certainly need $p > (\# U)^2$, which explains the requirement $p > d^{2 \gamma}$ above.

We now come to a characterization of the absolutely
irreducible varieties for which the answer to Question (\ref{question}) is ``yes''.
The bounds on $\Delta$ in this paper depend on $q$ and $r$,
and also on the parameters
$n$, $U$, and various degrees. For such a function $g$ and $s\geq 0$, we write
$g \in O(q^s)$ and $g \in \Omega(q^s)$, respectively, if $g \leq c q^s$ and $g \geq c q^s$
hold with functions $c$ of the parameters, but independent of $q$ and $s$,
and which are positive for a large range of the parameters.

\begin{corollary}
\label{character}
Let $p$ be a prime, $p > d \geq 1$, and $U \subseteq \fp^n$ closed
under shifts to zero. Furthermore, we assume $p \geq 4 \alpha^2$ for
$\alpha$ from (\ref{alpha}).
Then for absolutely irreducible $\fp$-varieties $X$ of dimension $r$
and degree $d$, we have:
\begin{align*}
X \text{ is } (U-U)\text{-shift-free} & \Longrightarrow \Delta \in O(p^{r-1}), \\
X \text{ is not } (U-U)\text{-shift-free} & \Longrightarrow \Delta \in \Omega(p^{r}).
\end{align*}
\end{corollary}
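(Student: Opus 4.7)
The plan is to read the corollary as nothing more than a repackaging of the two preceding theorems into the $O/\Omega$ notation introduced in the paragraph just before the statement. Both implications follow in one line each, so the work is mainly bookkeeping.

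For the first implication, I would invoke Theorem \ref{th:EstimateGenTubeVarieties} (\ref{shiftFreeBound}) directly with $q=p$. Since $X$ is absolutely irreducible of dimension $r$ and degree $d$, the hypothesis $p>d$ and $(U-U)$-shift-freeness give
$$\Delta \le \#U \cdot (\#(U-U)-1) \cdot d^2 p^{r-1}.$$
The prefactor $c = \#U \cdot (\#(U-U)-1) \cdot d^2$ depends only on $U$, $n$, and $d$, all of which the convention preceding the corollary treats as parameters independent of $p$ and $r$. The prefactor is positive whenever $\#U \geq 2$ (the case $\#U=1$ forces $U=\{0\}$ and $\Delta=0$, so the bound holds trivially). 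This yields $\Delta \in O(p^{r-1})$.

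For the second implication, every hypothesis of Theorem \ref{notShiftFree} is in place verbatim: $X$ is absolutely irreducible, $p>d$, $U$ is closed under shifts to zero, $X$ is not $(U-U)$-shift-free, and $p \ge 4\alpha^2$ with $\alpha$ from (\ref{alpha}). That theorem delivers $\Delta \ge p^r/2$, which is $\Delta \in \Omega(p^r)$ with implied constant $1/2$, manifestly positive and independent of $p$ and $r$.

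There is no real obstacle to overcome here; the corollary is stated precisely so that its two halves correspond cleanly to Theorem \ref{th:EstimateGenTubeVarieties} (\ref{shiftFreeBound}) and Theorem \ref{notShiftFree}. The only point worth emphasizing in the write-up is that the hidden constants in $O(\cdot)$ and $\Omega(\cdot)$ are explicit and positive in the relevant parameter ranges, as required by the definition of the notation, so that the dichotomy between the shift-free and non-shift-free cases separates $\Delta$ into genuinely different asymptotic regimes in $p$.
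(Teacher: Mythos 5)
Your proof is correct and takes essentially the same route as the paper, which simply cites Theorem \ref{th:EstimateGenTubeVarieties}~(\ref{shiftFreeBound}) for the shift-free case and Theorem \ref{notShiftFree} for the non-shift-free case. Your additional remarks on the positivity of the implied constants and the degenerate case $\#U=1$ are accurate and harmless elaborations, not a departure in method.
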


\begin{proof}
The claim in the first line is in Theorem \ref{th:EstimateGenTubeVarieties} (\ref{shiftFreeBound}).
The second line follows from
Theorem \ref{notShiftFree}.
\end{proof}

\section{Weil bounds, standard neighborhoods,\\and hypersurfaces}
\label{standardNeighborhood}

While both summands of $\Delta$ are defined in (\ref{eq:LowerBound})
in terms of $\# X$, the Weil bounds in Fact \ref{generalUpperBounds}
allow more specific numerical bounds depending just on the dimension
and degree of $X$; due to their generality, these are somewhat less
precise. The paper's introduction explains our original motivation
of dealing with standard neighborhoods $U_h = \{ a \in \fp^n \colon
\norm{a} \leq h \}$ over $\fp$. We spell out the consequences of our
more general results for this special case. Furthermore, we discuss
the particular case of hypersurfaces in more detail.

\subsection{Weil bounds}

Are the upper bounds on $\Delta$ in Theorem \ref{th:EstimateGenTubeVarieties}
``small'' in relation to the two arguments of $\Delta$?
This is not always the case, as shown in Corollary \ref{character}.
Furthermore, if $\sigma=0$ in \eqref{components}, the asymptotic behavior of $\#
X(\fq)$ does not have a simple description suitable for our purposes.
For a partial positive answer, we now rule out this case and substitute a numerical
approximation for $\# X(\fq)$.
Then the upper bound in Theorem \ref{th:EstimateGenTubeVarieties} (\ref{shiftFreeBound})
indeed turns out to be small.
\begin{corollary}\label{coro:EstimateGenTubeVarieties}
With hypotheses and notations as in Theorem
\ref{th:EstimateGenTubeVarieties} (\ref{shiftFreeBound}), assume further that $\sigma>0$
and denote $D=\sum_{1\leq i \leq\sigma} \deg X_i$.
Then
\begin{align*}
|\#(X(\fq)+&U)-\# U\cdot \sigma q^r|\le\#U\big(D^2q^{r-{1}/{2}}+
(5D^{{13}/{3}}+\#(U-U)d^2)q^{r-1}\big).
\end{align*}
\end{corollary}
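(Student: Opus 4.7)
The plan is to combine three ingredients: the trivial upper bound $\#(X(\fq)+U)\le \#X(\fq)\cdot \#U$ from (\ref{eq:UpperBound}), the lower bound for $\#(X(\fq)+U)$ extracted from Theorem \ref{th:EstimateGenTubeVarieties} (\ref{shiftFreeBound}), and the Weil-type approximation of $\#X(\fq)$ by $\sigma q^r$ provided by Fact \ref{generalUpperBounds} (\ref{th:EstimateRatPointsVariety}). All three apply: the essential shift-freeness hypothesis is inherited from Theorem \ref{th:EstimateGenTubeVarieties} (\ref{shiftFreeBound}), the condition $q>d$ needed for Fact \ref{generalUpperBounds} (\ref{th:EstimateRatPointsVariety}) is exactly the standing hypothesis of Theorem \ref{th:EstimateGenTubeVarieties}, and $\sigma>0$ is assumed here.

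First I would rewrite the quantity to be estimated by inserting $\#X(\fq)\cdot \#U$ as a pivot:
\begin{align*}
\#(X(\fq)+U)-\#U\cdot\sigma q^r &= \#U\bigl(\#X(\fq)-\sigma q^r\bigr) - \Delta,
\end{align*}
so that by the triangle inequality
$$
\bigl|\#(X(\fq)+U)-\#U\cdot\sigma q^r\bigr|\le \#U\cdot\bigl|\#X(\fq)-\sigma q^r\bigr| + \Delta.
$$
Here $\Delta\ge 0$ by (\ref{eq:UpperBound}), so no absolute value is needed on it.

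Second, I would plug in the two available estimates. Fact \ref{generalUpperBounds} (\ref{th:EstimateRatPointsVariety}) gives
$$
\bigl|\#X(\fq)-\sigma q^r\bigr|\le (D-1)(D-2)q^{r-1/2}+(5D^{13/3}+d^2)q^{r-1},
$$
and Theorem \ref{th:EstimateGenTubeVarieties} (\ref{shiftFreeBound}) gives
$$
\Delta\le \#U\,(\#(U-U)-1)\,d^2 q^{r-1}.
$$
Adding these, with a factor $\#U$ on the first, produces the two $q^{r-1}$ contributions $(5D^{13/3}+d^2)q^{r-1}$ and $(\#(U-U)-1)d^2 q^{r-1}$, which collapse to $(5D^{13/3}+\#(U-U)\,d^2)q^{r-1}$.

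Finally I would absorb the leading $q^{r-1/2}$ term using the trivial $(D-1)(D-2)\le D^2$, and collect everything inside the common factor $\#U$ to obtain the claimed bound. There is no real obstacle here; the argument is purely bookkeeping, with the only subtlety being to check that the $d^2 q^{r-1}$ piece from Fact \ref{generalUpperBounds} (\ref{th:EstimateRatPointsVariety}) combines cleanly with the $(\#(U-U)-1)d^2 q^{r-1}$ piece from $\Delta$ into the single term $\#(U-U)\,d^2 q^{r-1}$ appearing in the statement.
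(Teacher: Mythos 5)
Your proposal is correct and is essentially the paper's own proof: both decompose $\#(X(\fq)+U)-\#U\,\sigma q^r$ via the pivot $\#X(\fq)\cdot\#U$, apply the triangle inequality, bound $\#U\,|\#X(\fq)-\sigma q^r|$ by Fact~\ref{generalUpperBounds}~(\ref{th:EstimateRatPointsVariety}) (using $(D-1)(D-2)\le D^2$), bound $\Delta$ by Theorem~\ref{th:EstimateGenTubeVarieties}~(\ref{shiftFreeBound}), and merge the two $q^{r-1}$ terms into $(5D^{13/3}+\#(U-U)\,d^2)q^{r-1}$.
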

\begin{proof}
By Fact \ref{generalUpperBounds} (\ref{th:EstimateRatPointsVariety}) we have
\begin{align*}
|\#X(\fq)\cdot\# U-&\# U\cdot\sigma q^r|\le
\#U\big(D^2q^{r-{1}/{2}}+ (5D^{{13}/{3}}+d^2)q^{r-1}\big).
\end{align*}
Theorem \ref{th:EstimateGenTubeVarieties} (\ref{shiftFreeBound}) and the triangle inequality
imply the claim.
\end{proof}

\subsection{Standard neighborhoods}

Throughout this subsection, we have a prime $p$, an $\fp$-variety $X \subseteq \A^n$ of dimension $r$, degree $d<p$,
and with decomposition (\ref{components}),
$D=\sum_{1 \leq i \leq\sigma} \deg X_i$,
an integer $h$ with $p > 2h \geq 2$, and the standard neighborhood $U_h =
\{ u \in \fp^n \colon \norm{u} \leq h \}$. Then
$$\#U_h=(2h+1)^n,\quad \#(U_h-U_h)= \# U_{2h} =(4h+1)^n.$$
Now $\Delta =  \#X(\fp) \cdot \# U_h - \# (X(\fp)+U_h)$ as in (\ref{Delta}).
If $X$ is essentially $U_{2h}$-shift-free, then the following bounds
are consequences of Theorem \ref{th:EstimateGenTubeVarieties}
(\ref{shiftFreeBound}) and Corollary
\ref{coro:EstimateGenTubeVarieties}.
\begin{corollary}
\begin{enumerate}
\item
$
\Delta  \leq ((2h+1)(4h+1))^nd^2p^{r-1},
$
\item
\label{standardWeil}
\begin{align*}
|\#(&X(\fp)+U_h)-(2h+1)^n\sigma p^r|\\\le
&(2h+1)^n\big(D^2p^{r-{1}/{2}}+
(5D^{{13}/{3}}+(4h+1)^nd^2)p^{r-1}\big).
\end{align*}
\item
If furthermore $X$ is absolutely irreducible, then $D=d$ in (\ref{standardWeil}).
\end{enumerate}
\end{corollary}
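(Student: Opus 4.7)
The plan is to apply the two already established general bounds to the specific set $U = U_h$, after recording the relevant cardinalities. The only mild preparatory observation is that the standard neighborhood satisfies $U_h - U_h = U_{2h}$: the inclusion $\subseteq$ comes from the triangle inequality for the $\infty$-norm, while the reverse inclusion follows componentwise, since any integer $u_i$ with $|u_i| \leq 2h$ can be written as $a_i - b_i$ with $|a_i|, |b_i| \leq h$ (for instance, $a_i = \min(u_i, h)$ and $b_i = a_i - u_i$ when $u_i \geq 0$, and symmetrically when $u_i < 0$). The cardinalities $\#U_h = (2h+1)^n$ and $\#(U_h - U_h) = (4h+1)^n$ are the ones already stated at the top of the subsection.

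For part (i), I would plug $U = U_h$ into Theorem~\ref{th:EstimateGenTubeVarieties}~(\ref{shiftFreeBound}), which under the standing essential $U_{2h}$-shift-freeness hypothesis yields $\Delta \leq \#U_h \cdot (\#(U_h - U_h) - 1)\cdot d^2 p^{r-1}$. Bounding $(4h+1)^n - 1 \leq (4h+1)^n$ and multiplying gives the stated $((2h+1)(4h+1))^n d^2 p^{r-1}$.

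For part (ii), I would substitute the same two cardinalities into Corollary~\ref{coro:EstimateGenTubeVarieties}; its hypotheses (an $\fq$-variety $X$ of dimension $r$ and degree $d < p$, with $\sigma > 0$, essentially $(U-U)$-shift-free) are precisely what is in force here with $q = p$ and $U = U_h$, and the formula is obtained verbatim.

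For part (iii), absolute irreducibility of $X$ forces the decomposition (\ref{components}) to consist of the single component $X_1 = X$, which is then absolutely irreducible of maximal dimension $r$; hence $\sigma = 1$ and $D = \deg X_1 = d$. There is no genuine obstacle in this proof: the entire argument is a substitution once the two cardinalities $\#U_h$ and $\#(U_h - U_h)$ are identified.
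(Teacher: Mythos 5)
Your proof is correct and follows the paper's own intent: the paper presents this corollary as an immediate consequence of Theorem~\ref{th:EstimateGenTubeVarieties}~(\ref{shiftFreeBound}) and Corollary~\ref{coro:EstimateGenTubeVarieties} once $\#U_h = (2h+1)^n$ and $\#(U_h - U_h) = (4h+1)^n$ are substituted, which is precisely the substitution argument you carry out. Your additional verification that $U_h - U_h = U_{2h}$ and the identification $\sigma = 1$, $D = d$ in the absolutely irreducible case are correct and merely make explicit what the paper leaves to the reader.
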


Next we specialize to some of our examples.
For the determinantal variety $M_s$
of $m \times n$ matrices with rank at most $s$ from Example \ref{boundedRank},
 we have
\begin{align*}
& \Delta \leq ((2h+1)(4h+1))^{mn}d^2p^{r-1},\\
& |\#(M_s(\fp)+U_h)-(2h+1)^{mn} p^r|\\
& \quad \le (2h+1)^{mn} d^2 \big(p^{r-{1}/{2}}+
(5d^{{7}/{3}}+(4h+1)^{mn})p^{r-1}\big).
\end{align*}

The variety $C_{n,\ell}$ of decomposable polynomials from Example \ref{ex:decomposables} has degree
 $d \leq \ell^{\ell+m-2}$ and satisfies:
\begin{align*}
& \Delta \leq ((2h+1)(4h+1))^{n+1}d^2p^{\ell+m-1},\\
& |\#(C_{n,\ell}(\fp)+U_h)-(2h+1)^{n+1} p^r|\\
& \quad \leq (2h+1)^{n+1} d^2 \big(p^{r-{1}/{2}}+
(5d^{{7}/{3}}+(4h+1)^{n+1})p^{r-1}\big).
\end{align*}

\subsection{Hypersurfaces}
For a hypersurface $X= \{ f=0 \}\subset\A^n$, where
$f\in\fq[x_1,\ldots,x_n]$ is squarefree, the decomposition
(\ref{components}) of $X$ corresponds to the factorization of $f$
into irreducible polynomials of $\fq[x_1,\ldots,x_n]$, the first
$\sigma = \rho$ many being absolutely irreducible. We assume that $d
= \deg X = \deg f <q$ and that $X$ is essentially
$(U-U)$-shift-free. With the usual notation in this section, we have
the following bounds.

\begin{corollary}
 \begin{enumerate}
 \item
 $\Delta \leq \#U \#(U-U) d^2q^{n-2}$,
 \item
\begin{align*}
|\#&(X(\fq)+U)-  \# U  \cdot \sigma q^{n-1}| \\
& \quad \le
\#U\big(D^2q^{n-{3}/{2}}+
(5D^{{13}/{3}}+\#(U-U)d^2)q^{n-2}\big).
\end{align*}
\end{enumerate}
 For $q=p$ prime and $U=U_h$, we have
\begin{enumerate}
 \setcounter{enumi}{2}
\item
 $\Delta \leq ((2h+1)(4h+1))^nd^2p^{n-2}$,
 \item
\begin{align*}
|\#(&X(\fp)+U_h)-(2h+1)^n\sigma p^{n-1}|\\ & \quad \le
(2h+1)^n\big(D^2p^{n-{3}/{2}}+
(5D^{{13}/{3}}+(4h+1)^nd^2)p^{n-2}\big).
\end{align*}
\end{enumerate}
\end{corollary}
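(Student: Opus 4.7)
The plan is to derive this Corollary as a direct specialization of Theorem \ref{th:EstimateGenTubeVarieties} (\ref{shiftFreeBound}) and Corollary \ref{coro:EstimateGenTubeVarieties} to the hypersurface setting. The only geometric input needed is the observation that if $X = \{f = 0\}$ with $f \in \fq[x_1, \ldots, x_n]$ squarefree of degree $d < q$, then $\dim X = n - 1$, and the decomposition (\ref{components}) of $X$ mirrors the factorization of $f$ into irreducibles: each factor defines an $\fq$-irreducible hypersurface of dimension $n-1$, so in particular $\sigma = \rho$ as asserted in the statement.

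First I would verify part (i): substituting $r = n - 1$ into Theorem \ref{th:EstimateGenTubeVarieties} (\ref{shiftFreeBound}) immediately yields $\Delta \leq \#U \cdot (\#(U-U)-1) \cdot d^2 q^{n-2}$, and bounding $\#(U-U) - 1 \leq \#(U-U)$ gives the displayed inequality. For part (ii), the $(U-U)$-shift-freeness hypothesis together with $r = n-1$ allows me to apply Corollary \ref{coro:EstimateGenTubeVarieties} verbatim, reading off the claimed bound on $|\#(X(\fq)+U) - \#U \cdot \sigma q^{n-1}|$ after replacing $r$ by $n-1$ in every exponent of $q$.

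Parts (iii) and (iv) are pure substitutions: setting $q = p$ and $U = U_h$ in parts (i) and (ii), and inserting the cardinalities $\#U_h = (2h+1)^n$ and $\#(U_h - U_h) = \#U_{2h} = (4h+1)^n$ recorded at the start of the subsection on standard neighborhoods. The resulting displays then match exactly the claimed inequalities, with $r-1$ specialized to $n-2$ and $r - 1/2$ to $n - 3/2$.

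I do not anticipate any real obstacle; the proof is essentially bookkeeping once the dimensional identity $r = n - 1$ is in place. The only point worth double-checking is the structural claim $\sigma = \rho$ for hypersurfaces, which holds because squarefreeness of $f$ excludes repeated irreducible factors and ensures that every $\fq$-irreducible component of $X$ is already a hypersurface of dimension $n-1$, so none can end up among the components $X_{\sigma+1}, \dots, X_\rho$ of strictly lower dimension in the general decomposition (\ref{components}).
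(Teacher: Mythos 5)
Your proposal is correct and matches the paper's (implicit) argument: the corollary is stated as a direct specialization of Theorem~\ref{th:EstimateGenTubeVarieties}~(\ref{shiftFreeBound}) and Corollary~\ref{coro:EstimateGenTubeVarieties} with $r=n-1$, followed by the substitutions $\#U_h=(2h+1)^n$ and $\#(U_h-U_h)=(4h+1)^n$, and no separate proof is given in the text. Your side observation that $\sigma=\rho$ follows from squarefreeness (so every $\fq$-irreducible component is a hypersurface of dimension $n-1$) correctly justifies the setup the paper records just before the corollary.
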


\section{Shift-invariant polynomials}

This section derives some properties of shift-invariant polynomials
and studies algorithmic aspects.

We use the Taylor expansion of multivariate
polynomials, employing the Hasse derivatives from (\ref{defHasse}). For
$f\in\fq[x_1,\ldots,x_n]$, its {\em $k$th partial (Hasse) derivative
$\mathcal{D}_{x_i}^{(k)}f$ with respect to $x_i$} is
$\mathcal{D}^{(k)}f$ for $f$ considered as an element of $R[x_i]$
with $R = \fq[x_1,\ldots,x_{i-1},x_{i+1},\ldots,x_n]$. Derivatives
with respect to different variables commute:
$\mathcal{D}_{x_i}^{(k)} \circ \mathcal{D}_{x_j}^{(\ell)} =
\mathcal{D}_{x_j}^{(\ell)} \circ \mathcal{D}_{x_i}^{(k)}$ for $i
\neq j$.
 For a multi-index
$s=(s_1,\ldots,s_n)\in\Z^n$, we write
$\mathcal{D}^{(s)}f=\mathcal{D}_{x_1}^{(s_1)}\circ \cdots\circ
\mathcal{D}_{x_n}^{(s_n)}$ if $s_1,\ldots,s_n \geq 0$, and
$\mathcal{D}^{(s)}f= 0$ otherwise.
For $s,t\in\N^n$, we have
\begin{equation}\label{eq:CompositionDiffOperators}
\mathcal{D}^{(s)}\circ
\mathcal{D}^{(t)}=\binom{s+t}{s}\mathcal{D}^{(s+t)}, \text{ where }
\binom{s+t}{s} = \prod_{1 \leq i \leq n} \binom{s_i+t_i}{s_i}.
\end{equation}

Furthermore, we let $\N_{\leq d}^n \subset\N^n$ be the set of indices
$s=(s_1,\ldots,s_n)$ with $|s|=s_1+\cdots+s_n\le d$.
For $s\in \N^n$ and $w = (w_1, \ldots, w_n) \in W^n$ for some ring $W$, we set
\begin{equation}
\label{powerProd}
w^s = \prod_{1 \leq i \leq n} w_i^{s_i}.
\end{equation}
With this terminology, we have the following version of the Taylor
formula: if $f\in\fq[x_1,\ldots,x_n]$ and
$y = (y_1,\ldots,y_n)$ are new indeterminates, then
\begin{equation}\label{eq:TaylorFormula}
f=\sum_{s\in\N^n_{\leq d}} \big( (\mathcal{D}^{(s)}f)(y) \big) \cdot (x-y)^s,
\end{equation}

The gradient of $f \in \fq[x_1,\ldots,x_n]$ is $\nabla
f = (\mathcal{D}^{(1)}_{x_1}(f), \ldots, \mathcal{D}^{(1)}_{x_n}(f))
\in \fq[x_1,\ldots,x_n]^n$.
We have the following consequence of shift invariance in terms of
derivatives.
\begin{corollary}\label{coro:CharactShiftInvariantVarietiesWithDerivatives}
With hypotheses as in Proposition
\ref{prop:CharactShiftInvariantVarieties}, if $X$ is $u$-shift-invariant for some
$u\in\fq^n\setminus\{0\}$, then for any $f\in I(X)$ and $\XX\in X$ we have
$(\nabla f)(\XX)\cdot u=0$.
\end{corollary}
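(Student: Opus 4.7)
The plan is to reduce the claim to a one-variable statement along the line through $\XX$ in the direction of $u$, and then read off the vanishing of $(\nabla f)(\XX)\cdot u$ as the coefficient of~$t$ in the Taylor expansion.

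First, since $X$ is $u$-shift-invariant with $u\neq 0$ and $p>d$, Proposition \ref{prop:CharactShiftInvariantVarieties} applies and tells us that $X$ is a cylinder in the direction of $u$. Thus for any $\XX\in X$ and any $t\in\cfq$, the point $\XX+tu$ belongs to $X$. Consequently, for any $f\in I(X)$ the polynomial $\varphi(t)=f(\XX+tu)\in\cfq[t]$ vanishes on the infinite set $\cfq$, and hence is the zero polynomial.

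Next, I apply the Taylor formula \eqref{eq:TaylorFormula} with $y=\XX$ and $x=\XX+tu$:
\begin{equation*}
\varphi(t)=f(\XX+tu)=\sum_{s\in\N^n_{\leq d}}(\mathcal{D}^{(s)}f)(\XX)\,(tu)^s=\sum_{s\in\N^n_{\leq d}}(\mathcal{D}^{(s)}f)(\XX)\,u^s\,t^{|s|},
\end{equation*}
using the notation $w^s=\prod_i w_i^{s_i}$ from \eqref{powerProd}. Identifying coefficients of the identically zero polynomial $\varphi$, every coefficient of every power of $t$ must vanish.

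Finally, isolating the coefficient of $t^1$, the contributing multi-indices are the unit vectors $s=e_i$ for $1\le i\le n$, and $(\mathcal{D}^{(e_i)}f)(\XX)=(\mathcal{D}_{x_i}^{(1)}f)(\XX)$ is exactly the $i$-th component of $\nabla f$ at $\XX$. Therefore
\begin{equation*}
0=[t^1]\,\varphi(t)=\sum_{1\le i\le n}(\mathcal{D}_{x_i}^{(1)}f)(\XX)\,u_i=(\nabla f)(\XX)\cdot u,
\end{equation*}
which proves the corollary. There is no real obstacle here; the only subtle point is ensuring that $\varphi$ really is the zero polynomial, which is guaranteed by $\cfq$ being infinite together with the cylinder property supplied by Proposition \ref{prop:CharactShiftInvariantVarieties} (this is why the hypothesis $p>d$ is needed, being inherited from that proposition).
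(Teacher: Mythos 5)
Your proof is correct and follows essentially the same route as the paper: both invoke Proposition \ref{prop:CharactShiftInvariantVarieties} to get the cylinder (line) property, conclude that $\varphi(t)=f(\XX+tu)$ vanishes identically, and then extract the linear term, which the paper phrases as ``the chain rule'' and you phrase as reading off the $t^1$ coefficient of the Taylor expansion \eqref{eq:TaylorFormula}. The two phrasings are the same computation, so there is nothing substantive to add.
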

\begin{proof}
Let $\XX\in X$ and $f\in I(X)$. Proposition
\ref{prop:CharactShiftInvariantVarieties} shows that the line
$\{\XX+tu:u\in\cfq\}$ is contained in $X$. As a consequence,
$f(\XX+tu)=0$ for any $t\in\A^1$. Then the corollary follows by the
chain rule.
\end{proof}

In order to study the shift-invariance of $f\in\fp[x_1,\ldots,x_n]$
of degree $d$, let $y_1,\ldots,y_n$ be new indeterminates.
 By the Taylor formula \eqref{eq:TaylorFormula},
\begin{align*}
f(x+y)&=\sum_{s\in
\N_{\leq d}^n}(\mathcal{D}^{(s)}f)(x)y^s\\
&=f(x)+\sum_{1 \leq i \leq n}(\mathcal{D}_{x_i}f)(x)y_i+\cdots+\sum_{|s|=
d}(\mathcal{D}^{(s)}f)(x)y^s,
\end{align*}
with $y^s$ as in (\ref{powerProd}).

We write $f=\sum_{1 \leq i \leq d} f_i$, where each $f_i\in\fq[x_1,\ldots,x_n]$ is zero or
homogeneous of degree $i$, and similarly
$f(x+y)=\sum_{i=1}^df_i^*$, where each $f_i^*\in ( \fq[y_1,\ldots,y_n] )
[x_1,\ldots,x_n]$ is zero or homogeneous of degree $i$ in the $x_j$.
The Taylor formula for each $f_j(x+y)$ implies that
\begin{align*}
f_d^*&=f_d,\\ f_{d-1}^*&={f}_{d-1}+
\sum_{1 \leq i \leq n}(\mathcal{D}_{x_i}f_d)(x)y_i,\\&\ \,\vdots\\
f_0^*&={f}_0+
\sum_{1 \leq i \leq n} (\mathcal{D}_{x_i}f_1)(x)y_i+\cdots+\sum_{|s|=d}(\mathcal{D}^{(s)}f_d)(x)y^s.
\end{align*}
Now for $u=(u_1,\ldots,u_n) \in \fq^n$ and replacing each $y_i$ by $-u_i$ in the above,
we conclude that $f=f^{(u)}$ if and only if
\begin{equation}\label{eq:IdentityShift}
f_j={f}_j-
\sum_{1 \leq i \leq n} (\mathcal{D}_{x_i}f_{j+1})(x)u_i+\cdots+ (-1)^{d-j} \sum_{|s|=d-j}(\mathcal{D}^{(s)}f_d)(x)u^s
\end{equation}
for $0\le j\le d$.

\begin{lemma}
For $u=(u_1, \ldots, u_n) \in \fq^n$ and $j>0$, we have
\begin{equation}\label{eq:CompositionDiffOperatW}
\Big(\sum_{1 \leq i \leq n}u_i\mathcal{D}_{x_i}\Big)\circ\Big(\sum_{|t|=j-1}u^t\mathcal{D}^{(t)}\Big)=j
\sum_{|s|=j}u^s\mathcal{D}^{(s)}.
\end{equation}
\end{lemma}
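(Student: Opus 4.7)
The plan is to reduce the identity to a bookkeeping calculation using the composition rule \eqref{eq:CompositionDiffOperators} for Hasse derivatives, then re-index and collect terms.

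First I would rewrite $\mathcal{D}_{x_i} = \mathcal{D}^{(e_i)}$, where $e_i \in \N^n$ is the $i$-th standard basis vector. The composition rule \eqref{eq:CompositionDiffOperators} gives
$$
\mathcal{D}^{(e_i)} \circ \mathcal{D}^{(t)} = \binom{e_i + t}{e_i}\, \mathcal{D}^{(e_i + t)} = (t_i + 1)\, \mathcal{D}^{(e_i+t)},
$$
since $\binom{e_i + t}{e_i} = \binom{t_i+1}{1} \prod_{k \neq i} \binom{t_k}{0} = t_i + 1$. Distributing the two outer sums over the composition, the left-hand side of \eqref{eq:CompositionDiffOperatW} becomes
$$
\sum_{1 \leq i \leq n}\ \sum_{|t|=j-1} u_i\, u^t\, (t_i+1)\, \mathcal{D}^{(e_i+t)}.
$$

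Next I would re-index. Set $s = e_i + t$; then $|s| = j$, $s_i \geq 1$, and the pair $(i,t)$ with $|t|=j-1$ is determined by choosing $s$ with $|s|=j$ together with an index $i$ with $s_i \geq 1$ (and then $t = s - e_i$, so $t_i + 1 = s_i$ and $u_i u^t = u^s$). The sum rewrites as
$$
\sum_{|s|=j}\ \sum_{\substack{1 \leq i \leq n \\ s_i \geq 1}} s_i\, u^s\, \mathcal{D}^{(s)}
= \sum_{|s|=j} \Bigl(\sum_{1 \leq i \leq n} s_i \Bigr) u^s\, \mathcal{D}^{(s)},
$$
where the inner constraint $s_i \geq 1$ can be dropped since $s_i = 0$ contributes zero. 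Finally, $\sum_i s_i = |s| = j$, which yields $j \sum_{|s|=j} u^s\, \mathcal{D}^{(s)}$, the right-hand side.

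There is no serious obstacle here: the whole proof is the composition formula plus a relabeling. The only thing to be careful about is the combinatorial identity $\binom{e_i+t}{e_i} = t_i + 1$ and the observation that the weights $(t_i + 1)$ from composition combine with the factor $u_i$ to give exactly the symmetric factor $s_i u^s$ after re-indexing, so that summing over $i$ reproduces the total degree $|s|$.
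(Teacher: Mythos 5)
Your proof is correct and takes essentially the same approach as the paper: apply the composition rule \eqref{eq:CompositionDiffOperators} to get $u_i\mathcal{D}_{x_i}\circ u^t\mathcal{D}^{(t)}=(t_i+1)u^{e_i+t}\mathcal{D}^{(e_i+t)}$, then re-index by $s=e_i+t$ and use $\sum_i s_i=|s|=j$. The paper simply phrases the re-indexing as ``fix $s$ with $|s|=j$ and collect the terms contributing $\mathcal{D}^{(s)}$,'' which is the same bookkeeping.
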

\begin{proof}
Equation \eqref{eq:CompositionDiffOperators} shows that for $t \in \N^n$ we have
$$u_i\mathcal{D}_{x_i}\circ
u^t\mathcal{D}^{(t)}=u_iu^t(t_i+1)\mathcal{D}^{(\mathrm{e}_i+t)}
=(t_i+1)u^{\mathrm{e}_i+t}\mathcal{D}^{(\mathrm{e}_i+t)},$$
where $e_i \in \N^n$ is the $i$th unit vector.
We consider some $s =(s_1,\ldots,s_n) \in\N^n$ with $|s|=j$.
Then
$\mathcal{D}^{(s)}$ arises in the following sum on the left hand
side of \eqref{eq:CompositionDiffOperatW}:
$$\sum_{1 \leq i \leq n} u_i\mathcal{D}_{x_i}\circ u^{s-\mathrm{e}_i}\mathcal{D}^{(s-\mathrm{e}_i)}=
\sum_{1 \leq i \leq n} s_iu^{s}\mathcal{D}^{(s)}=ju^{s}\mathcal{D}^{(s)}.$$
This shows the claim.
\end{proof}

The following characterizes the shift-invariant
polynomials.
\begin{proposition}\label{coro:FirstCharactShiftInvariantPols}
Let $p>d$, $q$ a power of $p$, and
$f\in\fq[x_1,\ldots,x_n]\setminus\{0\}$ of degree $d$. Then $f$ is
shift-invariant under $u=(u_1,\ldots,u_n)\in\fq^n\setminus\{0\}$ if
and only if
\begin{equation}
\label{nabla}
\sum_{1 \leq i \leq n} u_i\mathcal{D}_{x_i}f=0.
\end{equation}
\end{proposition}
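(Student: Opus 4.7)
The plan is to work with the differential operator $P_j = \sum_{|s|=j} u^s \mathcal{D}^{(s)}$ for $j \ge 0$. By the Taylor formula \eqref{eq:TaylorFormula}, introducing a new indeterminate $t$ and substituting $y = tu$, one has the identity
$$
f(x+tu) - f(x) = \sum_{j=1}^d t^j \, P_j f(x) \in \fq[x_1, \ldots, x_n][t].
$$
Since $f = f^{(u)}$ is equivalent to $f(x+u) = f(x)$ (as $f^{(u)}(x) = f(x-u)$), the proposition will reduce to the equivalence: $P_1 f = 0$ if and only if $P_j f = 0$ for every $1 \le j \le d$.

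For the ``if'' direction, I would assume $P_1 f = 0$ and invoke the lemma $P_1 \circ P_{j-1} = j\, P_j$ from \eqref{eq:CompositionDiffOperatW}. Since $p > d \ge j$, each $j$ is invertible in $\fq$, so by induction $P_j = (1/j!)\, P_1^j$ as operators, hence $P_j f = 0$ for every $j \ge 1$. Substituting $t = 1$ in the displayed identity yields $f(x+u) = f(x)$, i.e., $f = f^{(u)}$.

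For the ``only if'' direction, I would iterate the identity $f(x+u) = f(x)$ to obtain $f(x+ku) = f(x)$ as polynomial identities in $x$ for every integer $k$. Viewing $k$ as an element of $\fp$, the polynomial $g(t) = f(x+tu) - f(x) \in \fq[x_1, \ldots, x_n][t]$ has degree at most $d$ in $t$ and vanishes at the $p$ distinct values $0, 1, \ldots, p-1$. As $p > d$, $g$ must vanish identically; comparing with the displayed Taylor expansion in $t$, every coefficient $P_j f$ must be zero, in particular $P_1 f = \sum_i u_i \mathcal{D}_{x_i} f = 0$.

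The main obstacle is that both directions hinge on the hypothesis $p > d$: in the backward direction it is needed to invert $j!$ for $j \le d$ and run the inductive bootstrap from $P_1 f = 0$ to the higher $P_j f = 0$, while in the forward direction it supplies enough distinct roots of $g$ in $\fp$ to force the polynomial identity in $t$. In small characteristic the first-order condition \eqref{nabla} need not propagate to higher orders, and the equivalence genuinely fails.
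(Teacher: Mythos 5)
Your proof is correct but takes a genuinely different and more streamlined route than the paper. The paper decomposes $f = \sum_j f_j$ into homogeneous parts, reformulates $f=f^{(u)}$ as the triangular system of identities \eqref{eq:IdentityShift}, and runs a backward induction on the degree $j$ to deduce $\sum_i u_i \mathcal{D}_{x_i} f_j = 0$ for each $j$; for the converse it passes from \eqref{nabla} to the same family of per-degree conditions using homogeneity. You instead introduce a single auxiliary scalar indeterminate $t$ and work with $g(t) = f(x+tu) - f(x) = \sum_{j\geq 1} t^j P_j f(x)$, where $P_j = \sum_{|s|=j} u^s \mathcal{D}^{(s)}$. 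For the ``only if'' direction, iterating $f(x+u)=f(x)$ gives $p>d$ roots of $g$ in $t$ over the integral domain $\fq[x_1,\ldots,x_n]$, so $g=0$ and in particular its linear coefficient $P_1 f$ vanishes; this entirely sidesteps the homogeneous decomposition and the backward induction. For the ``if'' direction you use the same lemma \eqref{eq:CompositionDiffOperatW}, i.e.\ $P_1\circ P_{j-1}=jP_j$, to bootstrap $P_j = \tfrac{1}{j!}P_1^{\,j}$ for $j\le d<p$, hence $P_j f = 0$ for all $j$ and $f(x+u)=f(x)$ at $t=1$. Both arguments rely on $p>d$ at the same two points (invertibility of $j!$, and enough roots to force a polynomial identity), but packaging the expansion as a single polynomial in $t$ collapses the paper's stack of degree-indexed identities into one counting-roots argument, which is cleaner to state and verify.
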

\begin{proof}
According to \eqref{eq:IdentityShift}, $f=f^{(u)}$ holds if
and only if
$$
- \sum_{1 \leq i \leq n} (\mathcal{D}_{x_i}f_{j+1})(x)u_i+ \cdots+  (-1)^{d-j} \sum_{|s|=d-j}(\mathcal{D}^{(s)}f_d)(x)u^s
= 0
$$
for $0\le j < d$. For $1\le k\le j$,
\eqref{eq:CompositionDiffOperatW} implies inductively that
$$
\sum_{|s|=j+k}u^s\mathcal{D}^{(s)}f_{j+k}=\frac{1}{(j+k)!}
\big(\sum_{1 \leq i \leq n} u_i\mathcal{D}_{x_i}\big)^{(j+k)}(f_{j+k}),
$$
where on the right hand side, the operator given by the sum is iterated $j+k$ times.
We deduce that, for $0\le j < d$,
\begin{equation}\label{eq:AuxShiftInvariancePols}
\big(-\sum_{1 \leq i \leq n} u_i\mathcal{D}_{x_i}\big)(f_{j+1})+
\cdots+\frac{(-1)^{d-j}}{(d-j)!}\big(\sum_{i=1}^nu_i\mathcal{D}_{x_i}\big)^{(d-j)}(f_d)
= 0.
\end{equation}

We claim that, for $1\le j\le d$,
$$\sum_{1 \leq i \leq n} u_i\mathcal{D}_{x_i}f_j=0.$$
Arguing by backward induction, the case $j=d$ is
\eqref{eq:AuxShiftInvariancePols} with $j=d-1$. Now suppose that the
assertion holds for any $k$ with $k>j\ge 1$. By
\eqref{eq:AuxShiftInvariancePols} we have
\begin{align*}
0&= \big(-\sum_{1 \leq i \leq n} u_i\mathcal{D}_{x_i}\big)(f_j)+
\cdots+\frac{(-1)^{d-j+1}}{(d-j+1)!}\big(\sum_{1 \leq i \leq n} u_i\mathcal{D}_{x_i}\big)^{(d-j+1)}(f_d)\\
&= \big(-\sum_{1 \leq i \leq n} u_i\mathcal{D}_{x_i}\big)(f_j),
\end{align*}
where the second identity is due to the inductive hypothesis. This
proves the claim.
Thus we have
$$\sum_{1 \leq i \leq n} u_i\mathcal{D}_{x_i}f=\sum_{1 \leq i \leq n} u_i\mathcal{D}_{x_i}\big(\sum_{1 \leq j \leq d} f_j\big)=
\sum_{1 \leq j \leq d} \big(\sum_{1 \leq i \leq n} u_i\mathcal{D}_{x_i}f_j\big)=0.$$
This finishes the proof of \eqref{nabla}.

On the other hand, if \eqref{nabla} holds, then by homogeneity it
follows that
$$\sum_{1 \leq i \leq n} u_i\mathcal{D}_{x_i}f_j=0$$
for $1\le j\le d$. This implies \eqref{eq:AuxShiftInvariancePols},
from which the $u$-shift-invariance of $f$ is readily deduced.
\end{proof}

This statement strengthens the corresponding one for
varieties (Corollary
\ref{coro:CharactShiftInvariantVarietiesWithDerivatives}) by providing a necessary and sufficient condition.
It also yields a polynomial-time algorithm for testing a polynomial for
(nontrivial) $\fq^n$-shift-invariance. Namely, (\ref{nabla})
corresponds to a system of linear equations in $u_1, \ldots, u_n$
with coefficients in $\fq[x_1,\ldots,x_n]$. Its size is polynomial
in the input size of $f$, given either in dense or sparse
representation. Its kernel
consists of all $u$
under which $f$ is shift-invariant,
and its triviality can be checked efficiently.

However, the problem of deciding $U_h$-shift-freeness turns out to be computationally hard,
namely coNP-complete under randomized reductions; see Theorem \ref{reductions} below.
Thus under standard complexity assumptions, no efficient algorithm for it exists.

We now provide an alternative statement and proof of Corollary
\ref{coro:CharactShiftInvariantVarietiesWithIdeals} in the special case where $X = \{ f=0 \}$
is a hypersurface.
\begin{proposition}\label{prop:SecondCharactShiftInvariantPols}
With hypotheses as in Proposition
\ref{coro:FirstCharactShiftInvariantPols}, 
$f$ is shift-invariant if and only if there exist
linearly independent linear forms
$\ell_1,\ldots,\ell_{n-1}\in\fp[x_1,\ldots,x_n]$ and $g \in
\fp[y_1,\ldots,y_{n-1}]$ with new variables $y_1,\ldots,y_{n-1}$
such that $f=g(\ell_1,\ldots,\ell_{n-1})$.
\end{proposition}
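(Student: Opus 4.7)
The plan is to derive both directions of Proposition \ref{prop:SecondCharactShiftInvariantPols} directly from the derivative criterion in Proposition \ref{coro:FirstCharactShiftInvariantPols}, using a single linear change of variables to reduce to the case where the shift direction is a coordinate axis.

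For the ``if'' direction, suppose $f = g(\ell_1, \ldots, \ell_{n-1})$ with $\ell_1, \ldots, \ell_{n-1}$ linearly independent linear forms in $\fp[x_1, \ldots, x_n]$. Since the $n-1$ gradient vectors $\nabla \ell_1, \ldots, \nabla \ell_{n-1} \in \fp^n$ are linearly independent, their common orthogonal complement is $1$-dimensional, so there exists $u \in \fp^n \setminus \{0\}$ with $(\nabla \ell_j) \cdot u = 0$ for $1 \leq j \leq n-1$. By the chain rule for the first Hasse derivative (which coincides with the usual partial derivative since $\binom{i}{1} = i$),
\begin{equation*}
\sum_{1 \leq i \leq n} u_i \mathcal{D}_{x_i} f = \sum_{1 \leq j \leq n-1} (\mathcal{D}_{y_j} g)(\ell_1, \ldots, \ell_{n-1}) \cdot ((\nabla \ell_j) \cdot u) = 0,
\end{equation*}
so Proposition \ref{coro:FirstCharactShiftInvariantPols} yields that $f$ is $u$-shift-invariant.

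For the ``only if'' direction, suppose $f$ is $u$-shift-invariant for some nonzero $u \in \fp^n$. Proposition \ref{coro:FirstCharactShiftInvariantPols} gives $\sum_i u_i \mathcal{D}_{x_i} f = 0$. Complete $u$ to an ordered basis of $\fp^n$ and let $A \in \fp^{n \times n}$ be the invertible matrix whose $n$-th column is $u$; introduce new variables $y = (y_1, \ldots, y_n)$ and set $\tilde f(y) = f(Ay)$. By the chain rule applied column-wise to $A$, the partial derivative satisfies
\begin{equation*}
\mathcal{D}_{y_n} \tilde f = \sum_{1 \leq i \leq n} A_{in} \, (\mathcal{D}_{x_i} f)(Ay) = \sum_{1 \leq i \leq n} u_i (\mathcal{D}_{x_i} f)(Ay) = 0.
\end{equation*}
Because $\deg \tilde f = \deg f = d < p$, the vanishing of the first partial derivative with respect to $y_n$ forces $\tilde f$ to involve no $y_n$, that is, $\tilde f = g(y_1, \ldots, y_{n-1})$ for some $g \in \fp[y_1, \ldots, y_{n-1}]$. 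Substituting $y = A^{-1} x$ and letting $\ell_j(x)$ be the $j$-th coordinate of $A^{-1} x$ for $1 \leq j \leq n-1$, we obtain $f(x) = g(\ell_1(x), \ldots, \ell_{n-1}(x))$, with $\ell_1, \ldots, \ell_{n-1}$ linearly independent linear forms in $\fp[x_1, \ldots, x_n]$ because $A^{-1}$ is invertible.

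The only subtlety is the step where $\mathcal{D}_{y_n} \tilde f = 0$ forces independence of $y_n$: this uses the characteristic hypothesis $p > d$ in an essential way, since otherwise $\tilde f$ could contain monomials in $y_n^p$. Once that is in hand, the argument reduces to routine linear algebra over $\fp$, so there is no real obstacle beyond packaging the chain rule and the change of coordinates carefully.
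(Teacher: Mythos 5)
Your proof is correct and takes essentially the same route as the paper: both apply the derivative criterion of Proposition~\ref{coro:FirstCharactShiftInvariantPols}, use a linear change of coordinates to put the shift direction $u$ on the last axis, and then invoke $p>d$ to pass from $\mathcal{D}_{y_n}\tilde f=0$ to independence of $y_n$. The only cosmetic difference is in the ``if'' direction, where the paper observes directly that $\ell_j(x-u)=\ell_j(x)$ for $u$ in the kernel of the linear forms (no need to re-invoke the derivative criterion), whereas you detour through the gradient condition; both are valid.
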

\begin{proof}
First suppose that $f$ is invariant under a nonzero shift $u\in
\fp^n$. We assume without loss of generality that $u_n\not=0$ and
consider the linear invertible change of variables
$$x_1=y_1 + u_1y_n,\ x_2=y_2 + u_2y_n,\ldots,x_n=u_ny_n,$$
similar to \eqref{linForm}.
By the Chain rule we see that
%
%
$\mathcal{D}_{y_i}f =\mathcal{D}_{x_i}f$ for $1 \leq i <n$, and
$$\mathcal{D}_{y_n}f=u_1\mathcal{D}_{x_1}f+u_2\mathcal{D}_{x_2}f+\cdots+
u_n\mathcal{D}_{x_n}f,
$$
which equals $0$ by Proposition
\ref{coro:FirstCharactShiftInvariantPols}. Since $\deg f=d<p$,
$f(y_1 + u_1y_n ,y_2 + u_2y_n,\ldots,u_n y_n)$ is separable in $y_n$,
that is, no exponent of $y_n$ is divisible by $p$. As a consequence,
$f(x_1,\ldots,x_n)=f(y_1 + u_1y_n,y_2 + u_2y_n,\ldots,u_n y_n)$ does not
depend on $y_n$, and is actually a polynomial $g \in
\fp[y_1,\ldots,y_{n-1}]$.
Thus $f = g(\ell_1,\ldots,\ell_{n-1})$ with $\ell_i = x_i - u_ix_n/u_n$ for $i<n$.

On the other hand, suppose that $f=g(\ell_1,\ldots,\ell_{n-1})$ with $g
\in \fp[y_1,\ldots,y_{n-1}]$ and linear forms $\ell_1,\ldots,\ell_{n-1}
\in\fp[x_1,\ldots,x_n]$. Then we can write $y=Ax$ with
$A\in\fp^{(n-1)\times n}$ and let $u\in\fp^n\setminus\{0\}$ be any
vector in the kernel of $A$. Then $f$ is invariant under the shift
$u$.
\end{proof}

We illustrate the application of Propositions
\ref{coro:FirstCharactShiftInvariantPols} and
\ref{prop:SecondCharactShiftInvariantPols} with examples of
hypersurfaces satisfying the hypotheses of Theorem
\ref{th:EstimateGenTubeVarieties} (\ref{shiftFreeBound}).

\begin{example}
Consider the graph $G=\{x_n=g(x_1,\ldots,x_{n-1})\}$ of $g \in
\fp[x_1,\ldots,x_{n-1}]$
with $p>d=\deg g \geq 2$. Let $f=x_n-g(x_1,\ldots,x_{n-1})$. Then
$\# G(\fp) = p^{n-1}$ and $f\in\fp[x_1,\ldots,x_n]$ is absolutely
irreducible with
$$\mathcal{D}_{x_i}f=-\mathcal{D}_{x_i}g \text{ for } 1\le i\le n-1 \text{ and } \mathcal{D}_{x_n} f=1.$$
We now show that $f$ is not shift-invariant. According to
Proposition \ref{coro:FirstCharactShiftInvariantPols}, we should
check if there exists $(u_1,\ldots,u_n)\in\fp^n\setminus\{0\}$ such
that
$$u_n=u_1\mathcal{D}_{x_1}g+\cdots+u_{n-1}\mathcal{D}_{x_{n-1}}g.$$
As $\deg g \geq 2$ and $g$ is separable, this condition is not
satisfied for any $(u_1,\ldots,u_n)\in\fp^n\setminus\{0\}$. Thus
Theorem \ref{th:EstimateGenTubeVarieties} (\ref{shiftFreeBound})
implies
$$
\Delta \le \#U \#(U-U) \cdot d^2q^{r-1},
$$
and for $U=U_h$
\begin{align*}
|\# & (G(\fp) +U_h)-(2h+1)^n p^{n-1}|\\
& \le (2h+1)^n d^2 \big(p^{n-{3}/{2}}+
(5d^{{7}/{3}}+(4h+1)^n)p^{n-2}\big). \hfill \lozenge
\end{align*}
\end{example}

\begin{example}
\label{ex:notsquarefree}
For $p>n\geq 2$ and $q$ a power of $p$, consider the variety $\mathcal{S}_n$ of
univariate polynomials
$f = \sum_{0\leq i \leq n} a_ix^i \in\cfq[x]$
of degree at most $n$ that are not squarefree. Then $\mathcal{S}_n\subset\A^{n+1}$
is the hypersurface defined by the generic discriminant
$\mathrm{disc}_n\in\fq[a_0,\ldots,a_n]$. As $\mathrm{disc}_n$ is
absolutely irreducible (see, e.g., \cite[Th\'eor\`eme
1.7]{Benoist12}), we check that it is not shift-invariant.

If $f \in\fq[x]$
has a unique double root $\alpha \in \cfp$, then by \cite[Chapter 12,
Equation (1.28)]{GeKaZe94}\footnote{Although the identity is stated
in \cite{GeKaZe94} for the case of characteristic zero, it holds for
$p>2$.} (compare with \cite{Shparlinski15}), the following two
projective points are equal:
$$[1 :\alpha : \cdots : \alpha^n] =
\left[ (\mathcal{D}_{a_0} \mathrm{disc}_n) (f) :
(\mathcal{D}_{a_1}\mathrm{disc}_n) (f) : \cdots :
(\mathcal{D}_{a_n}\mathrm{disc}_n) (f)\right].$$

Now, if $u=(u_0,\ldots,u_n)\in\fp^{n+1}$ is such that
$$\nabla \mathrm{disc}_n\cdot u=0,$$
where $\nabla \mathrm{disc}_n$ is the gradient of
$\mathrm{disc}_n$, then in particular
\begin{equation}\label{eq:IdentityShiftInvarianceDiscriminant}
(1,\alpha ,\ldots, \alpha^n)\cdot u =0
\end{equation}
for any $\alpha\in\fp$, since there exists some polynomial with
$\alpha$ as its unique double root. As $p>n$, there exist pairwise
distinct elements $\alpha_0,\ldots,\alpha_n\in\fp$ for which
\eqref{eq:IdentityShiftInvarianceDiscriminant} is satisfied. We
conclude that $u$ is in the kernel of the $(n+1)\times(n+1)$
Vandermonde matrix defined by $\alpha_0,\ldots,\alpha_n$, which is
nonsingular. It follows that $u=0$, and Proposition
\ref{coro:FirstCharactShiftInvariantPols} implies that
$\mathrm{disc}_n$ is not shift-invariant.

Since $\deg \mathrm{disc}_n=2n-1 <2n$,
we obtain
$$
\Delta \le \#U \#(U-U) \cdot (2n)^2q^{n-1},
$$
and for $U=U_h \subseteq \fp$:
\begin{align*}
|\#(\mathcal{S}_n(\fp)&+U_h)-(2h+1)^{n+1} p^n|\\\le & 4n^2
(2h+1)^{n+1}\big(p^{n-{1}/{2}}+
(5(2n)^{{7}/{3}}+(4h+1)^{n+1})p^{n-1}\big). \hfill \lozenge
\end{align*}
\end{example}%

\begin{example}
\label{ex:approximate_gcd} Generalizing Example
\ref{ex:notsquarefree}, for $p>n+m+2$ and $q$ a power of $p$, we consider the variety
$\mathcal{S}_{n,m}$ of pairs of univariate polynomials $f =
\sum_{0\leq i \leq n} a_ix^i \in\cfq[x]$ and $g = \sum_{0\leq i \leq
m} b_ix^i \in\cfq[x]$ of degrees at most $n$ and $m$ that are not
coprime. It is well known that $\mathcal{S}_{n,m}\subset\A^{n+m+2}$
is the hypersurface defined by the generic resultant
$\mathrm{res}_{n,m}\in\fq[a_0,\ldots,a_n,b_0,\ldots,b_m]$. As
$\mathrm{res}_{n,m}$ is absolutely irreducible (see, e.g.,
\cite[Corollary 6.7.2]{Mora03}), we check that it is not
shift-invariant, using the approach of the previous example.

If $f,g \in\fq[x]$ have a unique common root $\alpha \in \cfq$, then by
\cite[Chapter 12, Equation (1.11)]{GeKaZe94}\footnote{Although the
identity is stated in \cite{GeKaZe94} for the case of characteristic
zero, it holds for $p>2$.}, we have the following identities of
projective points:
\begin{align*}
[1 :\alpha : \cdots : \alpha^n] &= \left[ (\mathcal{D}_{a_0}
\mathrm{res}_{n,m}) (f,g) :
\cdots : (\mathcal{D}_{a_n}\mathrm{res}_{n,m}) (f,g)\right],\\
[1 :\alpha : \cdots : \alpha^m] &= \left[ (\mathcal{D}_{b_0}
\mathrm{res}_{n,m}) (f,g) : \cdots :
(\mathcal{D}_{b_m}\mathrm{res}_{n,m}) (f,g)\right].
\end{align*}
In particular, considering suitable scalar multiples $\lambda f$ and
$\mu g$ of $f$ and $g$, with nonzero $\lambda,\mu\in\cfp$, we have
$$(1,\alpha,\ldots,\alpha^{n+m+1})=\nabla\mathrm{res}_{n,m} (\lambda f,\mu g)
= \lambda^m \mu^n \, \nabla\mathrm{res}_{n,m} (f, g),$$
where $\nabla \mathrm{res}_{n,m}$ is the gradient of
$\mathrm{res}_{n,m}$.

Now, if $u=(u_0,\ldots,u_{n+m+1})\in\fp^{n+m+2}$ is such that
$$\nabla \mathrm{res}_{n,m}\cdot u=0,$$
then in particular
\begin{equation}\label{eq:IdentityShiftInvarianceResultant}
(1,\alpha ,\ldots, \alpha^{n+m+1})\cdot u =0
\end{equation}
for any $\alpha\in\fp$, since there exist pairs of polynomials with
$\alpha$ as its unique common root. As $p>n+m+2$, there exist
pairwise distinct elements $\alpha_1,\ldots,\alpha_{n+m+2}\in\fp$
for which \eqref{eq:IdentityShiftInvarianceResultant} is satisfied.
We conclude that $u$ is in the kernel of the $(n+m+2)\times(n+m+2)$
Vandermonde matrix defined by $\alpha_1,\ldots,\alpha_{n+m+2}$,
which is nonsingular. It follows that $u=0$, and Proposition
\ref{coro:FirstCharactShiftInvariantPols} implies that
$\mathrm{res}_{n,m}$ is not shift-invariant.

Since $\deg \mathrm{res}_{n,m}=n+m$,
we obtain
$$
\Delta \le \#U \#(U-U) \cdot (n+m)^2q^{n+m},
$$
and for $U=U_h \subseteq \fp$:
\begin{align*}
|\#&(\mathcal{S}_{n,m}(\fp)+U_h)-(2h+1)^{n+m+2} p^{n+m+1}|\\\le &
(n+m)^2 (2h+1)^{n+m+2}p^{n+m}\big(p^{{1}/{2}}+
5(n+m)^{{7}/{3}}+(4h+1)^{n+m+2}\big). \hfill \lozenge
\end{align*}
\end{example}%

A natural question is the algorithmic aspect of shift-freeness:
can we determine efficiently whether a variety is $U$-shift-free?
Single polynomials can be tested for $\fq^n$-shift-freeness in polynomial time,
using \eqref{nabla}.
However, the neighborhood given by $U=\fq^n$ is not relevant in our context,
since  then $X+U = \fq^n$ for any $X$.

For more interesting neighborhoods $U$, the answer to the above
question is negative: the problem of determining
$U_h$-shift-freeness turns out to be coNP-complete under randomized
reductions.
This means that
under standard complexity assumptions, no efficient algorithm exists
for this task. This holds even for the special case where the
variety $X$ is a hyperplane $\{f=0\}$ with a linear $f$ and $U=U_h$ is a standard
neighborhood, already for $h=1$.

For a fixed $u\in \fq^n$ and hypersurfaces $X=\{f=0\}$ and
 $Y=\{g=0\}$ with squarefree polynomials $f$ and $g$ and leading coefficients
 (in some term order) $a$ and $b$, respectively,
we have $X = Y^{(u)}$ if and only if $b f(x) - a g(x-u) =0$.
This can be tested
in random polynomial time (with one-sided
error) by evaluating that difference at $x=c$ for uniformly random
points $c$ in a large enough finite subset of $\cfp^n$.
This works even in  a very concise presentation by a ``black box'' which produces
the values of polynomials in unit time.

Our starting point is the decision problem
{\sc Equal subset sum}, whose input is a sequence $(a_1, \ldots, a_n)$ of nonnegative integers
presented in binary.
The task is to decide whether there exist two disjoint nonempty sets
$S, T \subseteq \{1,\ldots,n\}$ with $\sum_{i \in S} a_i = \sum_{i
\in T} a_i $. \cite{woeyu92} show that it is NP-complete. It is a
variant of {\sc Partition}, one of the ``original'' NP-complete
problems.

The variant {\sc Equal subset sum modulo prime} has $(a_1, \ldots, a_n)$ as above and a prime $p$ (in binary)
as input, and the question is again whether subsets $S$ and $T$ as above exists, now with
$\sum_{i \in S} a_i \equiv \sum_{i \in T} a_i \bmod p$.

Our interest is in the decision problem {\sc Non-shiftfreeness}.
We only consider the simple version with a standard neighborhood $U_h$ and a single polynomial
$f \in \fp[x_1, \ldots, x_n]$ of total degree at most $d$.
The input is presented by the prime $p$ and an integer $h$ with $p > 2h >2$ in binary,
$n$ and $d$ in unary,
and $f$ in dense representation. That is, for each exponent vector $(e_1, \ldots, e_n)$
with $\sum_{1 \leq i \leq n} e_i \leq d$,
the coefficient of $x^e$ in $f$ is given in binary.
The task is to decide whether there exists a nonzero $u\in U_h$ with $f = f^{(u)}$.

For two decision problems $A$ and $B$, we write $A \leq_p B$ if there exists a deterministic
polynomial-time reduction from $A$ to $B$, and $A \leq_r B$ if there is
some randomized polynomial-time reduction from $A$ to $B$.
The notion here is ``Las Vegas'', that is, the reduction returns either the correct answer
or ``fail''; the latter with probability at most $1/2$.
The corresponding complexity class is called ZPP (zero error probabilistic polynomial time).

\begin{theorem}
\label{reductions}
We have
\begin{align*}
\text{{\sc Equal subset sum}}& \leq_r \text{{\sc Equal subset sum modulo prime}} \\ & \leq_p \text{{\sc Non-shiftfreeness}}
\in \text{NP}.
\end{align*}
\end{theorem}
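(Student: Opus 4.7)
The theorem packages three separate claims, which I treat in order of increasing subtlety: NP membership of {\sc Non-shiftfreeness}, the randomized reduction {\sc ESS} $\leq_r$ {\sc ESS-mod-prime}, and the deterministic reduction {\sc ESS-mod-prime} $\leq_p$ {\sc NSF}. For NP membership, the certificate is a nonzero $u \in U_h$, of bit-length $n\lceil\log_2(2h+1)\rceil$, polynomial in the input since $h$ is given in binary. Given $u$, the shifted polynomial $f^{(u)}$ is obtained from the dense representation of $f$ by the substitutions $x_i \mapsto x_i - u_i$ and re-expansion, which takes time polynomial in the number of monomials of $f$ and in $\log p$; the check $f = f^{(u)}$ is then a coefficient-wise comparison.

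For {\sc ESS} $\leq_r$ {\sc ESS-mod-prime}, let $M = a_1 + \cdots + a_n$. I sample uniformly random integers in $(M, 2M]$ and test them for primality via Miller--Rabin, repeating until a prime $p$ is found; by Bertrand's postulate this interval contains $\Omega(M/\log M)$ primes, so the number of trials needed for success probability at least $1/2$ is polynomial in $\log M$, and the overall procedure is Las Vegas. I output $(a_1,\ldots,a_n,p)$. Since any partial sums $\sum_{i\in S} a_i$ and $\sum_{i\in T} a_i$ lie in $[0,M] \subset [0,p)$, congruence modulo $p$ is equivalent to equality in $\Z$, and so {\sc ESS} and {\sc ESS-mod-prime} agree on this output.

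For {\sc ESS-mod-prime} $\leq_p$ {\sc NSF}, the construction outputs $h=1$ and the linear polynomial $f = a_1 x_1 + \cdots + a_n x_n \in \fp[x_1,\ldots,x_n]$. Since $\mathcal{D}_{x_i} f = a_i$, Proposition \ref{coro:FirstCharactShiftInvariantPols} gives that $f = f^{(u)}$ is equivalent to $\sum_i u_i a_i \equiv 0 \pmod p$. Decomposing any nonzero $u \in U_1$ into its signed supports $S^+ = \{i : u_i = 1\}$ and $S^- = \{i : u_i = -1\}$ translates this into $\sum_{i\in S^+} a_i \equiv \sum_{i\in S^-} a_i \pmod p$, so an {\sc ESS-mod-prime} YES immediately produces a two-sided NSF witness.

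The reverse implication is the main obstacle, because a NSF witness could be \emph{one-sided} (with $S^+$ or $S^-$ empty), certifying only that some nonempty subset of the $a_i$ sums to $0 \bmod p$, which need not force an {\sc ESS-mod-prime} YES in general. My plan to dispatch this is to lean on the compositional structure: the instances produced by step 1 satisfy $p > M$, so every integer $\sum_i u_i a_i$ with $u \in \{-1,0,1\}^n$ lies in $(-p,p)$ and therefore vanishes modulo $p$ only if it vanishes in $\Z$; since the $a_i$ are positive, no one-sided $u$ achieves this, and on these instances NSF and {\sc ESS-mod-prime} coincide. To upgrade step 2 into a $\leq_p$ reduction valid on every {\sc ESS-mod-prime} input, the plan would be to augment $f$ with a linear gadget on a few auxiliary variables that forces both $S^+$ and $S^-$ to be nonempty; designing and verifying such a gadget is where I expect the bulk of the technical work to lie.
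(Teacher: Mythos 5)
Your NP-membership argument and your first reduction mirror the paper's (the paper certifies primality deterministically rather than via Miller--Rabin, but both yield a Las Vegas reduction; and the paper's converse verification that $|b_S - b_T| < p$ forces $b_S = b_T$ in $\Z$ is exactly your interval argument). The construction for the second reduction is also identical: $f = \sum_i a_i x_i$ over $\fp$ with $h=1$, and Proposition \ref{coro:FirstCharactShiftInvariantPols} converting shift-invariance under $u \in U_1$ into $\sum_i u_i a_i \equiv 0 \pmod p$. Where you diverge is in noticing that the map $u \mapsto (S,T)$ with $S=\{i:u_i=1\}$, $T=\{i:u_i=-1\}$ need not land on a pair of \emph{nonempty} subsets, and here you have in fact found a genuine gap in the paper's argument: the paper simply asserts that this assignment is a bijection between nonzero $u\in U_1$ and pairs of disjoint nonempty subsets and that it carries NSF witnesses to ESS-mod-prime witnesses ``and vice versa,'' but this is false for one-sided $u$. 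A concrete counterexample to the $\leq_p$ reduction as written: $n=3$, $p=7$, $(a_1,a_2,a_3)=(2,5,1)$. No pair of disjoint nonempty subsets has congruent sums mod $7$, so {\sc ESS-mod-prime} answers NO, yet $u=(1,1,0)$ gives $\sum u_i a_i = 7 \equiv 0$, so $f=2x_1+5x_2+x_3$ is $u$-shift-invariant and {\sc Non-shiftfreeness} answers YES.

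Your proposed repair via composition is the right move and works after one small patch you should state explicitly: first delete any $a_i=0$ (this changes neither problem's answer), so that the instance fed into the first reduction has all $a_i>0$ and $p > M = \sum a_i > 0$; then for any nonzero $u\in\{-1,0,1\}^n$ with $\sum u_i a_i \equiv 0 \pmod p$ we have $\sum u_i a_i = 0$ in $\Z$, and positivity forces $u$ to contain both a $+1$ and a $-1$, so the map to $(S,T)$ is a genuine bijection on witnesses. This yields {\sc ESS} $\leq_r$ {\sc NSF} directly, which suffices for the NP-hardness consequence drawn after the theorem, but it does not establish the displayed intermediate claim {\sc ESS-mod-prime} $\leq_p$ {\sc NSF} on arbitrary inputs. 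To prove that assertion as stated one genuinely needs a gadget of the kind you sketch (or a redefinition of the intermediate problem), and you leave that part open, so your proposal is incomplete; but you have correctly and precisely diagnosed why, and the paper's own proof suffers from the same defect.
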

\begin{proof}
For the first reduction,
on input of nonnegative integers $(a_1, \ldots, a_n)$, we choose randomly a prime $p > \sum_{1\leq i \leq n} a_i$
and consider {\sc Equal subset sum modulo prime} with input $((a_1, \ldots, a_n),p)$.
The random choice is done by choosing integers larger than $b = \sum_{1\leq i \leq n} a_i$,
testing them deterministically for primality and accepting the first one that is certified to be prime.
Since the length of $b$ is polynomial in the input size, all this can be done error-free in polynomial expected time.
It is the only step where randomization intervenes.
If prime gaps were of polynomial size, this could even be done deterministically.

If $(S,T)$ is a solution for the {\sc Equal subset sum} instance, then it is also one for this instance of {\sc Equal subset sum modulo prime}.
On the other hand, suppose that $(S,T)$ is a solution for this {\sc Equal subset sum modulo prime} instance,
so that $\sum_{i \in S} a_i \equiv \sum_{i \in T} a_i \bmod p$.
We denote the two sums, taken as integers, as $b_S$ and $b_T$, respectively.
Then there exists an integer $k$ with $b_S - b_T = kp$ in $\Z$.
But $|b_S - b_T | \leq b < p$, so that $k=0$ and $(S,T)$ is also a solution for {\sc Equal subset sum}.

For $\text{{\sc Equal subset sum modulo prime}} \leq_p \text{{\sc Non-shiftfreeness}}$,
on input $((a_1, \ldots, a_n),p)$ with all $a_i \in \fp$, we take the linear form
$f = \sum_{1 \leq i \leq n} a_i x_i \in \fp[x_1,\ldots, x_n]$ and $U_1 = \{ 0,1,-1\}^n \subseteq \fp^n$.
Then $\mathcal{D}_{x_i}(f) = a_i$ for all $i$.
By Proposition \ref{coro:FirstCharactShiftInvariantPols}, $f$ is shift-invariant under some $u \in U_1$
if and only if $\sum_{1\leq i \leq n} a_i u_i = 0$.
We set up a bijection between $U_1$ and pairs of disjoint nonempty subsets
$S, T \subseteq \{1,\ldots,n\}$ by requiring for each $i \in \{1,\ldots,n\}$:
$$
i \in S \Longleftrightarrow u_i = 1; \quad i \in T \Longleftrightarrow u_i = -1.
$$
This bijection maps solutions $u$ of {\sc Non-shiftfreeness} to solutions $(S,T)$ of
{\sc Equal subset sum modulo prime}, and vice versa.
Furthermore, the reduction can be executed in deterministic polynomial time.

For $\text{{\sc Non-shiftfreeness}} \in \text{NP}$, we have some $u$ with $f = f^{(u)} = f(x-u)$.
Since the input $f$ is given in dense representation, we can compute the dense
representation of $f(x-u)$ in polynomial time, and then compare it to that of $f$.
\end{proof}

It follows that  {\sc Non-shiftfreeness} is NP-complete under randomized reductions,
and the natural complementary problem {\sc Shiftfreeness} is similarly coNP-complete.
Under standard complexity assumptions, no efficient algorithm for it exists.

\begin{center}
{\sc Open questions}
\end{center}

When $u\in \fq$ and two varieties $X$ and $Y$ are given, can we test efficiently whether
$X=Y^{(u)}$? When $X=Y$?
For hypersurfaces, this is feasible; see above.

When $Y$ is absolutely irreducible, $u \in U$ nonzero, and $X = Y \cup Y^{(u)}$,
what can we say about question (\ref{question})?

\vspace{1ex}

\begin{center}
{\sc Acknowledgements}
\end{center}

Many thanks go to Igor Shparlinski who mentioned this problem to us, on a visit by JvzG to Sydney.
We are grateful to Heiko R\"oglin for pointing the reference \cite{woeyu92}
out to us.


\begin{thebibliography}{22}
\expandafter\ifx\csname
natexlab\endcsname\relax\def\natexlab#1{#1}\fi
\providecommand{\url}[1]{\texttt{#1}} \providecommand{\href}[2]{#2}
\providecommand{\path}[1]{#1} \providecommand{\DOIprefix}{doi:}
\providecommand{\ArXivprefix}{arXiv:}
\providecommand{\URLprefix}{URL: }
\providecommand{\Pubmedprefix}{pmid:}
\providecommand{\doi}[1]{\href{http://dx.doi.org/#1}{\path{#1}}}
\providecommand{\Pubmed}[1]{\href{pmid:#1}{\path{#1}}}
\providecommand{\bibinfo}[2]{#2} \ifx\xfnm\relax
\def\xfnm[#1]{\unskip,\space#1}\fi
\bibitem[{Beltr{\'{a}}n and Pardo(2007)}]{belpar07}
\bibinfo{author}{C.~Beltr{\'{a}}n}, \bibinfo{author}{L.~M. Pardo},
\newblock \bibinfo{title}{Estimates on the distribution of the condition number
  of singular matrices},
\newblock \bibinfo{journal}{Foundations of Computational Mathematics}
  \bibinfo{volume}{7} (\bibinfo{year}{2007}) \bibinfo{pages}{87--134}.
\bibitem[{Benoist(2012)}]{Benoist12}
\bibinfo{author}{O.~Benoist},
\newblock \bibinfo{title}{Degr\'es d'homog\'en\'eit\'e de l'ensemble des
  intersections compl\`etes singuli\`eres},
\newblock \bibinfo{journal}{Ann. Inst. Fourier (Grenoble)} \bibinfo{volume}{62}
  (\bibinfo{year}{2012}) \bibinfo{pages}{1189–--1214}.
\bibitem[{Bruns and Vetter(1988)}]{BrVe88}
\bibinfo{author}{W.~Bruns}, \bibinfo{author}{U.~Vetter},
  \bibinfo{title}{Determinantal rings}, volume \bibinfo{volume}{1327} of
  \textit{\bibinfo{series}{Lecture Notes in Math.}},
  \bibinfo{publisher}{Springer}, \bibinfo{address}{Berlin Heidelberg New York},
  \bibinfo{year}{1988}.
\bibitem[{Cafure and Matera(2006)}]{CaMa06}
\bibinfo{author}{A.~Cafure}, \bibinfo{author}{G.~Matera},
\newblock \bibinfo{title}{Improved explicit estimates on the number of
  solutions of equations over a finite field},
\newblock \bibinfo{journal}{Finite Fields Appl.} \bibinfo{volume}{12}
  (\bibinfo{year}{2006}) \bibinfo{pages}{155--185}.
\bibitem[{Cohen(1972)}]{Cohen72}
\bibinfo{author}{S.~Cohen},
\newblock \bibinfo{title}{Uniform distribution of polynomials over finite
  fields},
\newblock \bibinfo{journal}{J. Lond. Math. Soc. (2)} \bibinfo{volume}{6}
  (\bibinfo{year}{1972}) \bibinfo{pages}{93--102}.
\bibitem[{Demmel(1988)}]{dem88}
\bibinfo{author}{J.~W. Demmel},
\newblock \bibinfo{title}{The probability that a numerical analysis problem is
  difficult},
\newblock \bibinfo{journal}{Mathematics of Computation} \bibinfo{volume}{50}
  (\bibinfo{year}{1988}) \bibinfo{pages}{449--480}.
\bibitem[{Edelman(1988)}]{edelman88}
\bibinfo{author}{A.~Edelman},
\newblock \bibinfo{title}{Eigenvalues and condition numbers of random
  matrices},
\newblock \bibinfo{journal}{SIAM Journal on Matrix Analysis and Applications}
  \bibinfo{volume}{9} (\bibinfo{year}{1988}) \bibinfo{pages}{543--560}.
\bibitem[{Edelman(1992)}]{edelman92}
\bibinfo{author}{A.~Edelman},
\newblock \bibinfo{title}{On the distribution of a scaled condition number},
\newblock \bibinfo{journal}{Mathematics of Computation} \bibinfo{volume}{58}
  (\bibinfo{year}{1992}) \bibinfo{pages}{185--190}.
\bibitem[{Fulton(1984)}]{Fulton84}
\bibinfo{author}{W.~Fulton}, \bibinfo{title}{Intersection Theory},
  \bibinfo{publisher}{Springer}, \bibinfo{address}{Berlin Heidelberg New York},
  \bibinfo{year}{1984}.
\bibitem[{von~zur Gathen and Matera(2017)}]{gatmat18}
\bibinfo{author}{J.~von~zur Gathen}, \bibinfo{author}{G.~Matera},
\newblock \bibinfo{title}{Density of real and complex decomposable univariate
  polynomials},
\newblock \bibinfo{journal}{Quart. J. Math.} \bibinfo{volume}{68}
  (\bibinfo{year}{2017}) \bibinfo{pages}{1227--1246}.
\bibitem[{Gelfand et~al.(1994)Gelfand, Kapranov, and Zelevinsky}]{GeKaZe94}
\bibinfo{author}{I.~Gelfand}, \bibinfo{author}{M.~Kapranov},
  \bibinfo{author}{A.~Zelevinsky}, \bibinfo{title}{Discriminants, Resultants,
  and Multidimensional Determinants}, \bibinfo{publisher}{Birkh\"auser},
  \bibinfo{address}{Boston}, \bibinfo{year}{1994}.
\bibitem[{Harris(1992)}]{Harris92}
\bibinfo{author}{J.~Harris}, \bibinfo{title}{Algebraic Geometry: a first
  course}, volume \bibinfo{volume}{133} of \textit{\bibinfo{series}{Grad. Texts
  in Math.}}, \bibinfo{publisher}{Springer}, \bibinfo{address}{New York Berlin
  Heidelberg}, \bibinfo{year}{1992}.
\bibitem[{Heintz(1983)}]{Heintz83}
\bibinfo{author}{J.~Heintz},
\newblock \bibinfo{title}{{Definability} and fast quantifier elimination in
  algebraically closed fields},
\newblock \bibinfo{journal}{Theoret. Comput. Sci.} \bibinfo{volume}{24}
  (\bibinfo{year}{1983}) \bibinfo{pages}{239--277}.
\bibitem[{Hotelling(1939)}]{hot39}
\bibinfo{author}{H.~Hotelling},
\newblock \bibinfo{title}{Tubes and spheres in $n$-spaces, and a class of
  statistical problems},
\newblock \bibinfo{journal}{American Journal of Mathematics}
  \bibinfo{volume}{61} (\bibinfo{year}{1939}) \bibinfo{pages}{440--460}.
\bibitem[{Hribernig and Stetter(1997)}]{hribernigStetter1997}
\bibinfo{author}{V.~Hribernig}, \bibinfo{author}{H.~J. Stetter},
  \bibinfo{title}{Detection and Validation of Clusters of Polynomial Zeros},
  \bibinfo{journal}{Journal of Symbolic Computation} \bibinfo{volume}{24}
  (\bibinfo{year}{1997}) \bibinfo{pages}{667--682}.
\bibitem[{Mora(2003)}]{Mora03}
\bibinfo{author}{T.~Mora}, \bibinfo{title}{Solving polynomial equation systems.
  {Vol. I}. {The} {Kronecker}-{Duval} philosophy}, volume~\bibinfo{volume}{88}
  of \textit{\bibinfo{series}{Encyclopedia Math. Appl.}},
  \bibinfo{publisher}{Cambridge Univ. Press}, \bibinfo{address}{Cambridge},
  \bibinfo{year}{2003}.
\bibitem[{Mullen and Panario(2013)}]{MuPa13}
\bibinfo{author}{G.~Mullen}, \bibinfo{author}{D.~Panario},
  \bibinfo{title}{Handbook of finite fields}, \bibinfo{publisher}{CRC Press},
  \bibinfo{address}{Boca Raton, FL}, \bibinfo{year}{2013}.
\bibitem[{Renegar(1987)}]{ren87}
\bibinfo{author}{J.~Renegar},
\newblock \bibinfo{title}{On the efficiency of Newton's method in approximating
  all zeros of a system of complex polynomials},
\newblock \bibinfo{journal}{Mathematics of Operations Research}
  \bibinfo{volume}{12} (\bibinfo{year}{1987}) \bibinfo{pages}{121--148}.
\bibitem[{Sch\"onhage(1985)}]{schoenhage1985}
\bibinfo{author}{A.~Sch\"onhage}, \bibinfo{title}{Quasi-{GCD} Computations},
  \bibinfo{journal}{Journal of Complexity} \bibinfo{volume}{1}
  (\bibinfo{year}{1985}) \bibinfo{pages}{118--137}.
\bibitem[{Shparlinski(2015)}]{Shparlinski15}
\bibinfo{author}{I.~Shparlinski},
\newblock \bibinfo{title}{Distribution of polynomial discriminants modulo a
  prime},
\newblock \bibinfo{journal}{Arch. Math.} \bibinfo{volume}{105}
  (\bibinfo{year}{2015}) \bibinfo{pages}{251--–259}.
\bibitem[{Smale(1981)}]{sma81}
\bibinfo{author}{S.~Smale},
\newblock \bibinfo{title}{The fundamental theorem of algebra and complexity
  theory},
\newblock \bibinfo{journal}{Bulletin of the American Mathematical Society}
  \bibinfo{volume}{4} (\bibinfo{year}{1981}) \bibinfo{pages}{1--36}.
\bibitem[{Vogel(1984)}]{Vogel84}
\bibinfo{author}{W.~Vogel}, \bibinfo{title}{Results on {B\'ezout}'s theorem},
  volume~\bibinfo{volume}{74} of \textit{\bibinfo{series}{Tata Inst. Fundam.
  Res. Lect. Math.}}, \bibinfo{publisher}{Tata Inst. Fund. Res.},
  \bibinfo{address}{Bombay}, \bibinfo{year}{1984}.
\bibitem[{Weyl(1939)}]{wey39}
\bibinfo{author}{H.~Weyl},
\newblock \bibinfo{title}{On the volume of tubes},
\newblock \bibinfo{journal}{American Journal of Mathematics}
  \bibinfo{volume}{61} (\bibinfo{year}{1939}) \bibinfo{pages}{461--472}.
\bibitem[{Woeginger and Yu(1992)}]{woeyu92}
\bibinfo{author}{G.~J. Woeginger}, \bibinfo{author}{Z.~Yu},
\newblock \bibinfo{title}{On the equal-subset-sum problem},
\newblock \bibinfo{journal}{Inform. Process. Lett.} \bibinfo{volume}{42}
  (\bibinfo{year}{1992}) \bibinfo{pages}{299--301}.

\end{thebibliography}

\end{document}